\numberwithin{table}{section}
\numberwithin{figure}{section}
\numberwithin{equation}{section}
\definecolor{darkblue}{rgb}{.2, 0.2,.8}
\definecolor{darkgreen}{rgb}{0,0.5,0.3}
\definecolor{darkred}{rgb}{.8, .1,.1}
\newcommand{\red}{\color{darkred}}
\newcommand{\blue}{\color{darkblue}}
\newcommand{\bfY}{\vect{Y}}
\newcommand{\bfy}{\vect{y}}
\newcommand{\bfX}{\vect{X}}
\newcommand{\bfalp}{\vect{\alpha}}
\newcommand{\bfpi}{\vect{\pi}}
\newcommand{\bfT}{\mat{T}}
\newcommand{\bft}{\vect{t}}
\newcommand{\bfe}{\vect{e}}
\newcommand{\bfI}{\mat{I}}
\newcommand{\bfbeta}{\vect{\beta}}
\newcommand{\bfTheta}{\vect{\Theta}}
\newcommand{\bftheta}{\vect{\theta}}
\newcommand{\0}{\mat{0}}
\newcommand{\R}{\mathbb{R}}
\newcommand{\E}{\mathbb{E}}
\renewcommand{\P }{{\mathbb P}}
\newcommand{\eqd}{\stackrel{d}{=}}
\newtheorem{lemma}{Lemma}[section]
\newtheorem{proposition}[lemma]{Proposition}
\newtheorem{definition}[lemma]{Definition}
\newtheorem{example}[lemma]{Example}
\newtheorem{remark}{Remark}[section]
\newcommand{\vect}[1]{\pmb{#1}}
\newcommand{\mat}[1]{\boldsymbol{\bm #1}}
\DeclareMathOperator*{\argmax}{arg\,max}
\begin{document}
%\today
\title[Heavy-tailed phase-type distributions]{Heavy-tailed phase-type distributions: a unified approach}
%\thanks{}

\author[M. Bladt]{Martin Bladt}
\address{Faculty of Business and Economics,
University of Lausanne,
Quartier de Chambronne,
1015 Lausanne,
Switzerland}
\email{martin.bladt@unil.ch}

\author[J. Yslas]{Jorge Yslas}
\address{Institute of Mathematical Statistics and Actuarial Science,
University of Bern,
Alpeneggstrasse 22,
CH-3012 Bern,
Switzerland}
\email{jorge.yslas@stat.unibe.ch}

\begin{abstract}
{\color{black}A phase-type distribution is the distribution of the time until absorption in a finite state-space time-homogeneous Markov jump process, with one absorbing state and the rest being transient. These distributions are mathematically tractable and conceptually attractive to model physical phenomena due to their interpretation in terms of a hidden Markov structure.} Three recent extensions of regular phase-type distributions give rise to models which allow for heavy tails: discrete- or continuous-scaling; fractional-time semi-Markov extensions; and inhomogeneous time-change of the underlying Markov process. In this paper, we present a unifying theory for heavy-tailed phase-type distributions for which all three approaches are particular cases. Our main objective is to provide useful models for heavy-tailed phase-type distributions, but any other tail behavior is also captured by our specification. We provide relevant new examples and also show how existing approaches are naturally embedded. Subsequently, two multivariate extensions are presented, inspired by the univariate construction which can be considered as a matrix version of a frailty model. We provide fully explicit EM-algorithms for all models and illustrate them using synthetic and real-life data. 
\end{abstract}
\keywords{frailty models; heavy tails; parameter estimation; phase-type; scale mixtures.}
\subjclass{Primary 60E05; Secondary 60G70; 62N02; 62F10; 60J22}
\maketitle

\section{Introduction}

{\color{black}Phase-type (PH)} distributions have been employed extensively in applied probability since they often provide exact and explicit solutions to complex stochastic problems. Another attractive property of PH distributions is that they form a dense class in the set of distributions in the positive half-line in the sense of weak convergence (see \cite[Section 3.2.1]{Bladt2017}). However, and despite their denseness, PH distributions are always light-tailed, which may be a problem when heavy tails are present.

 At least three approaches to remedy this problem have been introduced in the literature. The first one, originally introduced in \cite{bladt2015calculation} and called the NPH class of distributions, consists of considering PH distributions scaled by nonnegative discrete random variables{\color{black}, $N$}. 
 This construction principle has the advantage that the resulting distribution maintains the interpretation as being the absorption time of a homogeneous Markov jump process but in an infinite-dimensional state-space. This, indeed, allows for genuinely heavy tails for the resulting distribution. {\color{black} For instance, in \cite{rojas2018asymptotic}, the authors showed that if the scaling component is unbounded (but otherwise arbitrary), then the resulting distribution is always heavy-tailed in terms of non-existent moment generating functions (see also \cite{su2006behavior} for more general results)}.  However, their different functionals are in terms of infinite-dimensional matrices, which in practice, can only be computed up to a finite number of terms. More recently, in \cite{albrecher2021cph}, the authors considered continuous scaling and showed that closed-form expressions for different functionals of the resulting distributions can be obtained. They denoted this class by CPH. Another advantage of continuous scaling is that it maintains the (finite) dimension of the underlying PH.
 
 A second approach was introduced in \cite{albrecher2019mml} by considering a time-fractional version of the underlying stochastic process dynamics, effectively moving into the semi-Markov domain. Together with subsequent multivariate extensions based on rewards (cf. \cite{albrecher2020mulmml,albrecher2020mulfrac}), these models were shown to be feasible models for applications such as non-life insurance modeling. More recently, \cite{ilibladt} showed that these models are relevant in describing lifetimes and performing the corresponding life-insurance calculations.
 
The third approach, introduced in \cite{albrecher2019inhomogeneous}, consists of allowing the Markov jump process to be time-inhomogeneous in the construction principle of PH distributions leading to the class of inhomogeneous phase-type (IPH) distributions. An advantage of this approach is that one gains substantial flexibility on the tails: not only are heavy tails possible but also, e.g., lighter tails than exponential-decay can be obtained. Further extensions to covariate-dependent distributions can be found in \cite{bladt2020survival}, which is particularly well-suited for survival analysis applications.

 Estimation of PH distributions was initially developed to calibrate such stochastic models to real-life data, and it is a well-developed topic in the literature. It is typically done via an expectation-maximization (EM) algorithm (\cite{asmussen1996fitting}), although other methods such as an MCMC approach have been introduced (\cite{bladt2003estimation}). More recent trends have moved towards considering PH-based models purely as flexible models for statistical fitting, irrespectively of their explicit and closed-form formulas. This data-driven approach is particularly attractive compared to other classical alternatives (for instance, kernel smoothing) since there is the implicit interpretation of an underlying process traversing through different states before it terminates, which is easy to justify in many application areas. Algorithms for discretely-scaled PH distributions, IPH models, and continuously-scaled PH distributions can be found, respectively, in  \cite{bladt2017fitting}, \cite{albrecher2020fitting}, and \cite{albrecher2021cph}. To the best of the authors' knowledge, an EM-based estimation procedure for fractional phase-type distributions (also called matrix Mittag-Leffler distributions) has not been considered before the present work, with \cite{albrecher2019mml} performing a purely numerical multi-dimensional maximum-likelihood estimation.

The primary purpose of this paper is to present a unified theory that englobes the above approaches to produce heavy-tailed phase-type distributions. The construction principle of the proposed models is simple to conceptualize and can be seen as a matrix extension of the frailty model in survival analysis.
However, the flexibility of the underlying Markov structure allows for very different objects to be constructed as special cases. More precisely, we study IPH distributions with intensity matrices scaled by any nonnegative random variable. In other words, we impose both a random and a deterministic component which modify the speed at which the finite state-space is traversed by the Markov process, such that absorption times can possess any desired tail and body behavior, in particular obtaining heavy-tailed distributions. Inhomogeneous generalizations of \cite{albrecher2021cph,rojas2018asymptotic}, the matrix Mittag-Leffler models of \cite{albrecher2019mml}, and randomly scaled generalizations of \cite{albrecher2019inhomogeneous,bladt2020survival} (with the possibility of missing covariates) are all comprised in this rich class.

{\color{black}In terms of physical interpretation, the latent variables play different roles. The underlying Markov dynamics aim to model heterogeneity by assuming that unobserved traversing of states has occurred. In contrast, the interpretation of the scaling component is closely related to the statistical concept of frailty.
Recall that frailty models (see, e.g., \cite{wienke2010frailty} for a comprehensive account of such models) specify a multiplicative random effect on the hazard rate of a distribution, effectively accounting for unobserved covariates in a Cox proportional hazards model. 
In contrast, we specify a multiplicative random effect on the intensity function of a Markov jump process. Nonetheless, since for IPH distributions, the hazard rate and intensity function are asymptotically equivalent (cf. \cite{bladt2020survival}), the scaling variable can also be interpreted as accounting for heterogeneity or missing covariates in an asymptotically proportional hazards model.}

The secondary aim of the paper is to present multivariate models based on this construction, which can be interpreted as generalizations of the shared and correlated frailty models (cf. \cite{wienke2010frailty}). We derive EM algorithms for maximum-likelihood estimation of all the proposed models, which can be implemented either in full generality or by simplifying some assumptions and tailoring the methods for the specific application. For pedagogical reasons, we build up the multivariate case from the univariate one, although a top-bottom approach is also possible.

The rest of the paper is organized as follows. In Section~\ref{sec:iph}, we present an overview of the class of IPH distributions and some important properties for our present purposes. 
In Section~\ref{sec:siph}, we introduce our main univariate model, which we call scaled inhomogeneous phase-type, derive its main properties, give several parametric examples relevant for real-life applications, and propose a generalized EM algorithm for its maximum-likelihood estimation.
In Section~\ref{sec:mult}, we present a multivariate extension inspired by the shared frailty model and show how estimation of the proposed models can be performed via EM algorithms. 
In Section~\ref{sec:correlated}, we present a different multivariate extension, now based on the construction principle of correlated frailty models, and derive an EM algorithm for maximum-likelihood estimation.
In Section~\ref{sec:Numexamples}, we present several numerical illustrations. 
Finally, Section~\ref{sec:conclusion} concludes. 

\section{Preliminaries}\label{sec:iph}
This section presents the relevant preliminaries on time-inhomogeneous Markov jump-processes and their absorption times. The distributions of the latter times are the building blocks for the scaled models introduced in Section~\ref{sec:siph}. For distributional equality between two random variables $X,Y$, we use the notation $X\stackrel{d}{=}Y$, while the notation $X\sim F$ for $F$ a distribution function, density, or acronym is understood as $X$ following the distribution uniquely associated with $F$. Unless stated otherwise, equalities between random objects hold almost surely. For two real-valued functions, $g,h$ the terminology $g(t)\sim h(t)$, as $t\to a\in\mathbb{R}\cup\{-\infty,+\infty\}$ is defined as $\lim_{t\to a}g(t)/h(t)=1$. If $a$ is not explicitly mentioned, it is assumed to be $+\infty$.

Let $ ( X_t )_{t \geq 0}$ denote a time-inhomogeneous Markov jump process on the state-space $E = \{1, \dots, p, p+1\}$, where states $1,\dots,p$ are transient and state $p+1$ is absorbing. In this way, $ ( X_t)_{t \geq 0}$ has an intensity matrix of the form
\begin{align*}
	\mat{\Lambda}(t)= \left( \begin{array}{cc}
		\bfT(t) &  \bft(t) \\
		\0 & 0
	\end{array} \right)\,, \quad t\geq0\,.
\end{align*}
%where $\bfT(t) $ is a $p \times p$ matrix and $\bft(t)$ is a $p$--dimensional column vector. In other words,
%    \[
%    \mat{P}^{h}=e^{\mat{\Lambda}(h)}=\sum_{k=0}^{\infty}\frac{\mat{\Lambda}(h)^k}{k!}
%    =\begin{pmatrix}
%    e^{\bfT(h)} & \bfe-e^{\bfT(h)}\bfe \\
%    \textbf{0} & 1
%    \end{pmatrix}
%    \]
%that is, we obtain the $h$-step transition matrix, and its elements correspond to \[\mathbb{P}(X_{h}=j,h\leq\tau \mid X_{0}=i)=(e^{\boldsymbol{\Lambda}(h)})_{ij},\quad i,j\in\{1,2,\dots,p,p+1\},\]
%
Since $\mat{\Lambda}(t) $ is an intensity matrix, the sum of its rows is zero for any time $t\geq0$, and so the identity $\bft (t)=- \bfT(t) \, \bfe,$ holds, where $\bfe $ is the $p$--dimensional column vector of ones. 
Moreover, the probability transition matrix $\boldsymbol{P}(s, t) = \{p_{k,l}(s,t)\}_{k,l \in E}$ of $ ( X_t )_{t \geq 0}$, where 
\begin{align*}
	p_{k,l}(s,t) = \mathbb{P}(X_{t}=l \mid X_{s}=k) \,, \quad k,l \in E \,,
\end{align*}
is given in terms of the product integral (see \cite{albrecher2019inhomogeneous})
\begin{align*}
	 \mat{P} (s, t) = \prod_s^t\left(\bfI + \mat{\Lambda} (u) du \right) =\begin{pmatrix}
    \prod_s^t\left(\bfI + \bfT (u) du \right) & \bfe-\prod_s^t\left(\bfI + \bfT (u) du \right) \bfe \\
    \textbf{0} & 1
    \end{pmatrix} \,.
\end{align*}

To avoid degeneracies, we assume that the process starts almost surely in a non-absorbing state $k\le p$ with probabilities given by $ \pi_{k} = \P(X_0 = k)$, $k = 1,\dots, p$. In vector notation, we write $\bfpi = (\pi_1 ,\dots,\pi_p )$. In the sequel, we follow the convention that greek boldface lowercase letters are row-vectors, while roman boldface lowercase letters are column-vectors.
%{\red , and more generally, we make the convention that boldface greek letters are row vectors and boldface latin letters are column vectors of appropriate dimensions}{\blue I think now we have a mixture}. 
Thus $\sum_{k=1}^p\pi_k=\bfpi \bfe=1$.

The main quantity of interest of such a process for our present purposes is the time taken to reach the absorbing state, denoted by
\begin{align*}
	\tau = \inf \{ t \geq  0 \mid X_t = p+1 \}\,,
\end{align*}
which has an inhomogeneous phase-type distribution (cf.\ \cite{albrecher2019inhomogeneous}) with representation $(\bfpi,\bfT(t))$, and we write $\tau \sim \mbox{IPH}(\bfpi,\bfT(t) )$. Application of such random variables to statistical modeling is often treated for the special case $\bfT(t) = \lambda(t)\,\bfT$, with $\lambda(t)$ some known nonnegative real function, known as the intensity function, and $\bfT$ a fixed sub-intensity matrix. We adopt this approach in the present text. Thus we may simply write $\tau \sim  \mbox{IPH}(\bfpi , \bfT , \lambda )$. The interested reader is referred to  \cite{Bladt2017} for a comprehensive account of the $\lambda\equiv1$ case and \cite{albrecher2019inhomogeneous} for further reading on general IPH distributions. 

The restricted class of IPH distributions is nonetheless quite versatile. Whenever $Y  \sim  \mbox{IPH}(\bfpi , \bfT , \lambda )$, then there exists a function $h$ such that \begin{equation}\label{gtrans}
Y \eqd h(Z) \,,
\end{equation}
where $Z \sim \mbox{PH}(\bfpi , \bfT )$.
More specifically, the relationship between $h$ and $\lambda$ is given by 
\begin{equation*}
h^{-1}(t) = \int_0^t \lambda (t)dt, \quad t\ge0,   \label{eq:transformation-g}
\end{equation*}
or in terms of derivatives
\begin{equation*}
\lambda (t) = \frac{d}{dt}h^{-1}(t) \,.  \label{eq:transformation-lambda} 
\end{equation*}
To make sure that $Y$ is positive, unbounded, and almost surely finite, we require that $$h^{-1}(t)<\infty\,,\quad\forall t>0\,,\quad \lim_{t\uparrow\infty}h^{-1}(t)=\infty\,.$$
The density $f_Y$ and survival function $S_Y$ of $Y \sim  \mbox{IPH}(\bfpi , \bfT , \lambda )$ are explicit in terms of matrix exponential formulas, and given by
\begin{eqnarray*}
 f_Y(y) &=& \lambda (y)\, \vect{\pi}\exp \left( \int_0^y \lambda (t)dt\ \mat{T} \right)\vect{t}, \quad y\ge0, \label{eq:dens-IPH} \\
 S_Y(y) &=&  \vect{\pi}\exp \left( \int_0^y \lambda (t)dt\ \mat{T} \right)\vect{e}, \quad y\ge0.  \label{eq:cdf-IPH}
\end{eqnarray*}

%In \cite{albrecher2019inhomogeneous} it was noted that a number of IPH distributions can be expressed as classical distributions with matrix-valued parameters. 
%Next we present two concrete examples. 
%\begin{example}[Matrix-Pareto]\rm 
%	The matrix-Pareto distribution is obtained via the transformation $h(z)=\beta \left( e^z-1\right) $, $\beta>0$, resulting in the density and survival function
%\begin{gather}\label{denspar}
%f_{Y}(y) = \bfpi \left( \dfrac{y}{\beta}+1\right)^{\bfT - \mat{I}} \bft \;\frac{1}{\beta}\,, \quad S_{Y}(y)=1- {F}_{Y}(y) = \bfpi \left(\dfrac{y}{\beta}+1\right)^{\bfT} \bfe \,.
%\end{gather}  
%\end{example}
%
%\begin{example}[Matrix-Weibull] \rm
%	The matrix--Weibull distribution with density and survival function
%\begin{gather*}\label{denswei}
%f_{Y}(y) = \bfpi e^{\bfT y^{\beta}} \bft \beta y^{\beta-1} \,,\quad S_{Y}(y) = \bfpi e^{\bfT y^{\beta}} \bfe \,,
%\end{gather*} 
%is obtained from the transformation $h(z)=z^{1/\beta}$ with $\beta>0$.  
%\end{example}
The tail behavior of IPH distributions is driven by the asymptotic behavior of the $\lambda$ function. Table~\ref{tab:trans} presents an overview of some commonly used intensities and transforms for generating parametric IPH distributions (see  \cite{bladt2021matrixdist}). Applications and estimation can be found, for instance, in \cite{albrecher2019inhomogeneous, albrecher2020fitting, bladt2020survival}. Their names are inspired by the $p=1$ case, e.g., a matrix-Weibull distribution reduces to the regular Weibull distribution when $\bfT$ is a $1\times 1$ matrix. In general, the additional parameters allow for more flexible modeling in the body of the distribution while preserving the same tail behavior as the scalar case. 

\begin{table}[h]
\centering
\begin{tabular}{lccc}
  \hline
      Distribution              &   $\lambda(t)$       & $h(z)$ & Parameters Domain  \\
  \hline
Matrix-Pareto       & $( t + \eta )^{-1}$ & $ \eta \left( \exp(z)-1\right)$ & $\eta>0$ \\[2mm] 
Matrix-Weibull      & $\eta t ^{\eta -1}$    & $z^{1/\eta}$ & $\eta>0$ \\ [2mm]
Matrix-Lognormal    & ${\gamma (\log(t+1))^{\gamma-1}}/{(t+1)} $    & $\exp(z^{1/\gamma})-1$      & $\gamma > 1$     \\ [2mm]
Matrix-Loglogistic  & $\eta t^{\eta -1} / (t^\eta + \gamma^{\eta})$    & $\gamma (\exp(z)-1)^{1/\eta}$    & $\gamma,\eta >0$     \\ [2mm]
Matrix-Gompertz     & $\exp (\eta t)$    & $\log( \eta z  + 1 ) / \eta$      & $\eta > 0$     \\ 
   \hline
\end{tabular}
\caption{Some IPH distributions with their respective intensities and transforms.}
\label{tab:trans}
\end{table}

\section{Scaled inhomogeneous phase-type distributions}\label{sec:siph}
In this section, we introduce the main general specification of the paper and then derive some special cases together with a detailed analysis of their specific tail asymptotics. The central assumption underpinning our model is that an individual's intensity function depends on an unobservable nonnegative random variable $\Theta$. More specifically, we focus on the case where $\Theta$ acts multiplicatively on the intensity function, that is 
 \begin{align}\label{eq:scaledint}
	 	\lambda(t ; \Theta ) = \Theta \lambda(t), \quad t\ge0,
	\end{align}
where $\lambda$ is the baseline intensity function. If we denote by $Y$ a random variable with intensity \eqref{eq:scaledint}, then we have that
\begin{align}\label{eq:cond}
	Y \mid \Theta = \theta \sim \mbox{IPH} (\bfpi, \bfT, \theta \lambda ) \,.
\end{align}
For the representation of these distributions, we make use of functional calculus. More specifically, if $g$ is an analytic function and $\mat{A}$ is a matrix, we can express  $g(\mat{A})$ by Cauchy's formula 
\begin{align*}
	g(\mat{A}) = \frac{1}{2 \pi i} \oint_{\Gamma} g(z)(z\bfI - \mat{A}) dz \,,
\end{align*}
where $\Gamma$ is the simple closed  path in $\mathbb{C}$ which encloses the eigenvalues of $\mat{A}$ (cf. \cite[Section 3.4.]{Bladt2017} for details). 

The following result characterizes the density and survival functions of $Y$. In particular, observe that the asymptotic behavior of the tail of $Y$ depends on both the shape of $\mathcal{L}_\Theta$, the Laplace transform of $\Theta$, and on $\lambda$.  In subsection~\ref{sec:example}, we give an in-depth  asymptotic analysis of the new parametric models presented in this paper.

 \begin{proposition}
 	Let $Y$ be given by \eqref{eq:cond}. Then we have that, for $y\ge0$,
 	\begin{enumerate}[label=(\arabic*)]
	  \setlength\itemsep{0.8em}
 		\item $S_{Y}(y) = \bfpi \mathcal{L}_\Theta ( - h^{-1}(y) \bfT ) \bfe $,
 		\item $f_Y(y) =  -\lambda(y) \bfpi \mathcal{L}_\Theta^{\prime} ( - h^{-1}(y) \bfT ) \bft$,
 	\end{enumerate}
 	where $h^{-1}(y)= \int_{0}^{y}\lambda(t)dt$. 
 \end{proposition}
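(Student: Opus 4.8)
The plan is to condition on $\Theta$ and integrate. Starting from \eqref{eq:cond}, we have $Y \mid \Theta = \theta \sim \mbox{IPH}(\bfpi, \bfT, \theta\lambda)$, so the conditional survival function is obtained from the IPH survival formula with intensity $\theta\lambda$ in place of $\lambda$. Note that the cumulative intensity transforms linearly: $\int_0^y \theta\lambda(t)\,dt = \theta\, h^{-1}(y)$, where $h^{-1}(y) = \int_0^y \lambda(t)\,dt$. Hence
\begin{align*}
S_Y(y \mid \Theta = \theta) = \bfpi \exp\!\left( \theta\, h^{-1}(y)\, \bfT \right) \bfe\,.
\end{align*}
Taking expectations over $\Theta$ and exchanging expectation with the vector-matrix operations (legitimate since $\bfpi$, $\bfe$ are deterministic and the entries of $\exp(\theta h^{-1}(y)\bfT)$ are bounded by the transition-probability structure, so dominated convergence applies), we get $S_Y(y) = \bfpi\, \E\!\left[ \exp\!\left( \theta h^{-1}(y) \bfT \right) \right] \bfe$.

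The key step is then to identify $\E[\exp(\theta h^{-1}(y)\bfT)]$ with $\mathcal{L}_\Theta(-h^{-1}(y)\bfT)$. This is where the functional calculus introduced just before the statement enters: for fixed $y$, write $\mat{A} = -h^{-1}(y)\bfT$, so that $\exp(\theta h^{-1}(y)\bfT) = \exp(-\theta\mat{A})$, and using Cauchy's formula together with Fubini (to move the expectation inside the contour integral, justified since $\Gamma$ is compact and $\theta\mapsto e^{-\theta z}$ is integrable against the law of $\Theta$ uniformly for $z$ on $\Gamma$, because $\mathrm{Re}(z)$ is bounded below on a contour enclosing the eigenvalues of $\mat{A}$, all of which have positive real part as $\bfT$ is a sub-intensity matrix),
\begin{align*}
\E\!\left[ \exp(-\theta \mat{A}) \right] = \frac{1}{2\pi i} \oint_{\Gamma} \E\!\left[ e^{-\theta z} \right] (z\bfI - \mat{A})^{-1} dz = \frac{1}{2\pi i} \oint_{\Gamma} \mathcal{L}_\Theta(z)\, (z\bfI - \mat{A})^{-1} dz = \mathcal{L}_\Theta(\mat{A})\,,
\end{align*}
which is precisely $\mathcal{L}_\Theta(-h^{-1}(y)\bfT)$. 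This proves (1).

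For (2), I would differentiate $S_Y$. Using $f_Y(y) = -\frac{d}{dy}S_Y(y)$, the chain rule gives $\frac{d}{dy}\mathcal{L}_\Theta(-h^{-1}(y)\bfT) = -\lambda(y)\, \mathcal{L}_\Theta'(-h^{-1}(y)\bfT)\,\bfT$, since $\frac{d}{dy}h^{-1}(y) = \lambda(y)$ and $\bfT$ commutes with any analytic function of itself. Hence $f_Y(y) = \lambda(y)\,\bfpi\, \mathcal{L}_\Theta'(-h^{-1}(y)\bfT)\,\bfT\,\bfe = -\lambda(y)\,\bfpi\,\mathcal{L}_\Theta'(-h^{-1}(y)\bfT)\,\bft$, using $\bft = -\bfT\bfe$. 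The main obstacle is the technical justification of the two interchange-of-limit steps (expectation with the matrix operations, and expectation with the contour integral); both are routine given that the eigenvalues of $\bfT$ lie strictly in the left half-plane, guaranteeing uniform integrability of $e^{-\theta z}$ over $\Gamma$, but they should be stated. Differentiating under the expectation/integral to get $f_Y$ requires the same kind of domination and can be folded into the same argument.
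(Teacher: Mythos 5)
Your proof is correct and follows essentially the same route as the paper: condition on $\Theta$, integrate the IPH survival function $\bfpi\exp(\theta h^{-1}(y)\bfT)\bfe$ against $F_\Theta$, and identify the resulting expectation with $\mathcal{L}_\Theta(-h^{-1}(y)\bfT)$ via Cauchy's formula and Fubini. The only cosmetic difference is in part (2), where you differentiate the functional-calculus expression directly via the chain rule while the paper differentiates under the mixture integral; both reduce to the same identity using $\bft=-\bfT\bfe$, and your added remarks on justifying the interchanges are a welcome (if routine) supplement to the paper's terser argument.
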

 \begin{proof}
 	Property (1) follows from
  \begin{align*}
 	S_{Y}(y) & = \int \bfpi \exp({ \theta h^{-1}(y) \bfT   }) \bfe \, dF_\Theta(\theta) \\
 	& =  \bfpi \int \exp({ \theta h^{-1}(y) \bfT   })  \, dF_\Theta(\theta) \bfe \\
	&=\bfpi \int \frac{1}{2 \pi i} \oint_{\Gamma} \exp(z)(z\bfI - \theta h^{-1}(y) \bfT) dz  \, dF_\Theta(\theta) \bfe \\
        &=\bfpi \frac{1}{2 \pi i} \oint_{\Gamma}\int  \exp(z)(z\bfI - \theta h^{-1}(y) \bfT)  dF_\Theta(\theta)  \, dz\,\bfe \\
 	& = \bfpi \mathcal{L}_\Theta ( - h^{-1}(y) \bfT ) \bfe \,,
 \end{align*}
where we have used functional calculus to define the Laplace transform evaluated at a matrix. 
%{\color{blue} (maybe we should say something before on Cauchy's formula and Jordan decomp/diagonalization)} 
Taking derivatives in the expression above yields
 \begin{align*}
 f_{Y}(y) & = -\bfpi \int \theta \lambda(y) \bfT \exp({ \theta h^{-1}(y) \bfT   })\, dF_\Theta(\theta) \bfe 
 \end{align*}
 from which (2) follows. 
 \end{proof}
 
 The following lemma shows that $Y$ has the same distribution as the transformation of a scaled PH distribution. Such a representation is useful for simulation and for estimation, as is apparent in later sections.
 \begin{lemma}
 	Let $Y$ be given in terms of \eqref{eq:cond}. Then, $Y \eqd h(Z /\Theta)$, where $Z \sim \mbox{PH}(\bfpi, \bfT)$, independent of $\Theta$, and $h^{-1}(y)= \int_{0}^{y}\lambda(t)dt$. 
 \end{lemma}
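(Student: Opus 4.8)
The plan is to condition on the frailty variable $\Theta$ and then reduce everything to the already-known transformation identity \eqref{gtrans} for ordinary IPH distributions.

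First I would fix $\theta>0$ and use \eqref{eq:cond}: conditionally on $\Theta=\theta$ we have $Y\sim\mbox{IPH}(\bfpi,\bfT,\theta\lambda)$, so \eqref{gtrans} gives $Y\mid\Theta=\theta\eqd h_\theta(Z)$ with $Z\sim\mbox{PH}(\bfpi,\bfT)$, where $h_\theta$ is the transform associated to the intensity $\theta\lambda$; by its defining relation $h_\theta^{-1}(t)=\int_0^t\theta\lambda(s)\,ds=\theta\,h^{-1}(t)$.

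The one genuine computation is to recognise $h_\theta$ as a simple rescaling of $h$. Under the standing hypotheses on $\lambda$ (which make $h^{-1}$, and hence $h$, a continuous strictly increasing bijection of $[0,\infty)$ onto itself), setting $u=h_\theta(t)$ and using $t=h_\theta^{-1}(u)=\theta\,h^{-1}(u)$ yields $h^{-1}(u)=t/\theta$, i.e.\ $h_\theta(t)=h(t/\theta)$. Hence $Y\mid\Theta=\theta\eqd h(Z/\theta)$ for every $\theta>0$. Since the conditional law of the $Z$ appearing here is $\mbox{PH}(\bfpi,\bfT)$ irrespective of $\theta$, I would then take a single $Z\sim\mbox{PH}(\bfpi,\bfT)$ independent of $\Theta$ and un-condition: for any Borel set $B$,
\[
\P\big(h(Z/\Theta)\in B\big)=\int\P\big(h(Z/\theta)\in B\big)\,dF_\Theta(\theta)=\int\P(Y\in B\mid\Theta=\theta)\,dF_\Theta(\theta)=\P(Y\in B),
\]
which is exactly the claimed identity $Y\eqd h(Z/\Theta)$. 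As a cross-check one may instead compute the survival function directly, $\P(h(Z/\Theta)>y)=\P(Z>\Theta\,h^{-1}(y))=\E[\bfpi\exp(\Theta\,h^{-1}(y)\bfT)\bfe]=\bfpi\,\mathcal{L}_\Theta(-h^{-1}(y)\bfT)\,\bfe$, in agreement with part (1) of the preceding proposition.

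I do not expect any serious obstacle here: the only points needing care are the monotonicity and invertibility of $h$ — used both to legitimise the identity $h_\theta(t)=h(t/\theta)$ and to pass between the events $\{h(Z/\Theta)>y\}$ and $\{Z/\Theta>h^{-1}(y)\}$ — which follow from $\lambda\ge 0$ together with $h^{-1}(t)<\infty$ for all $t>0$ and $\lim_{t\uparrow\infty}h^{-1}(t)=\infty$, and the routine Fubini/monotone-class justification of the un-conditioning step.
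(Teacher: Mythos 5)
Your proposal is correct and follows essentially the same route as the paper: condition on $\Theta=\theta$, use monotonicity of $h$ to pass between $\{h(Z/\theta)>y\}$ and $\{Z>\theta h^{-1}(y)\}$, and match the resulting mixture of PH survival functions with $S_Y$ — indeed your ``cross-check'' computation is verbatim the paper's proof. Your explicit derivation of $h_\theta(t)=h(t/\theta)$ from $h_\theta^{-1}=\theta\,h^{-1}$ is a harmless elaboration of a step the paper leaves implicit.
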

 \begin{proof}
 	\begin{align*}
 		\P (h(Z /\Theta) > y) &= \int \P (h(Z /\theta) > y \mid \Theta=\theta)   dF_{\Theta}(\theta) \\
 		&= \int \P ( Z  > \theta h^{-1}(y) \mid \Theta=\theta)   dF_{\Theta}(\theta) \\
 		& = \int \bfpi \exp({ \theta h^{-1}(y) \bfT   }) \bfe \, dF_\Theta(\theta) \\
 		& = S_{Y}(y) \,.
 	\end{align*}
 	
 \end{proof}
We now make the following formal definition of a random variable $Y$ satisfying \eqref{eq:cond}.
 
 \begin{definition}\rm 
 	A random variable $Y$ is said to have {\em scaled inhomogeneous phase-type distribution} (SIPH) with representation $(\bfpi, \bfT, \lambda)$ and scaling distribution $F_\Theta$ if its survival function is given by 
 	\begin{align*}
 		S_Y(y) = \bfpi \mathcal{L}_\Theta \left( - \int_0^y \lambda(t) dt \,\bfT \right) \bfe, \quad y\ge0.
 	\end{align*}
 	We write $\mbox{SIPH}(\bfpi, \bfT, \lambda, \Theta)$.
 \end{definition}

 \begin{remark}[Existing special cases of heavy-tailed PH models] \rm 
 	
	i) For $\lambda \equiv 1$ and $\Theta\in \mathbb{N}$, almost surely, we obtain the class of NPH distributions introduced in \cite{bladt2015calculation}, while for $\lambda \equiv 1$ and $\Theta\in \mathbb{R}_+$, almost surely, we recover the CPH class in \cite{ albrecher2021cph, rojas2018asymptotic}. 
	%Estimation of those classes via EM algorithms was treated in  \cite{albrecher2021cph} and \cite{bladt2017fitting}, respectively. 
 
 	ii) Consider {\color{black} a Matrix Mittag Leffler (fractional phase-type) random variable }$Y \sim \mbox{MML}(\alpha, \bfpi, \bfT)$ as introduced in \cite{albrecher2019mml}. Then, it can be shown that 
 	\begin{align*}
 		Y \eqd Z^{1/\alpha} S_{\alpha} = (Z S_{\alpha}^{\alpha} )^{1/\alpha} \,,
 	\end{align*}
 	where $Z \sim \mbox{PH}(\bfpi, \bfT)$ and $S_{\alpha}$ is an independent (positive stable) random variable with Laplace transform given by $\exp(-u^{\alpha})$, $\alpha \in (0,1]$. Hence, we have that $Y $ is SIPH distributed with $h(x) = x^{1/\alpha}$ and $\Theta = 1/S_{\alpha}^{\alpha} $. This class of distributions is the time-fractional counterpart of PH distributions and can be interpreted as absorption times of a stochastic process that traverses through a finite number of states. The holding times of the latter are Mittag-Leffler distributed, which are regularly varying, and thus can possess abnormally large holding times compared to a Markov framework. However, the boundary case $\alpha=1$ corresponds to the usual exponential holding times, and thus there is a regime-shift with respect to tail behavior.
	
	 	iii) When the scaling component $\Theta$ degenerates to a point $\Theta \equiv k\in\mathbb{R}_+$, we recover the class of IPH distributions. This also implies that the class of SIPH distributions, with a given and fixed intensity, is dense in the class of distributions on the positive real line. The argument is omitted, but it is a simple application of convergence through the diagonal of an array, for instance, by choosing a sequence of scalings $\Theta_n$ with constant mean $k$ and variances shrinking to zero.

 \end{remark}
 
\begin{remark}[Frailty models as a $p=1$ special case]\rm

Recall that for a continuous and positive random variable $Y$, the hazard function $\mu_Y$ is given by 
\begin{align*}
	\mu_Y(t) = \frac{f_Y(t)}{S_Y(t)}, \quad t\ge0.
\end{align*}
Sometimes, it is convenient to deal with the cumulative or integrated hazard function  $M_Y$, which is given by 
\begin{align*}
	M_Y(t) = \int_0^{t} \mu_Y (s) ds = -\log (S_Y(t)), \quad t\ge0.
\end{align*}

The classical frailty model in survival analysis assumes that the hazard function of an individual depends on an unobservable random variable $\Theta$. More specifically, it assumes that $\Theta$ acts multiplicatively on a baseline hazard function $\mu$, that is
\begin{align}\label{eq:frailty}
	\mu(t; \Theta) = \Theta \mu(t), \quad t\ge0.
\end{align}
Here, the random variable $\Theta$ is known as the frailty. If we denote by $Y$ the random variable with the above hazard, then the survival function of $Y \mid \Theta = \theta$ is given by 
\begin{align*}
	S_{Y|\Theta}(y | \theta) = \exp \left( -\theta \int_0^y \mu(t)dt \right) = \exp \left( -\theta M(y)\right) \,.
\end{align*}
Thus, the unconditional survival function of $Y$ is given by
\begin{align*}
	S_Y(y) = \int_0^\infty S_{Y|\Theta}(y | \theta) dF_\Theta(\theta) = \int_0^\infty \exp \left( -\theta M(y)\right) dF_\Theta(\theta)  = \mathcal{L}_\Theta(M(y)) \,.
\end{align*}
%where $\mathcal{L}_\Theta$ is the Laplace transform of $\Theta$. 
Furthermore, model \eqref{eq:frailty} can incorporate covariates $\bfX = (X_1,\dots, X_{q})^{\top}\in\mathbb{R}^q$ in a similar way to the Cox's proportional hazards model via 
\begin{align*}
	\mu(t; \Theta, \bfX) = \Theta \mu(t) \exp ( \bfbeta \bfX ), \quad t\ge0,
\end{align*}
where $\bfbeta\in\mathbb{R}^q$ is a $q$-dimensional parameter row vector. 
 Note that when the frailty degenerates to $\Theta\equiv1$, one recovers the proportional hazards model, meaning that the frailty model generalizes the proportional hazards model. 
Commonly employed distributions as frailties include the Gamma and the positive stable distributions, among others. 

In \cite{bladt2020survival}, it was shown that the intensity function of an IPH distribution is asymptotically equivalent to its hazard function. More specifically, we have that $\lambda(t) \sim C \mu(t)$ as $t \to \infty$ with $C>0,$ a positive constant. In particular, when $p = 1$, the previous asymptotic result becomes equality. It follows that the frailty model is a special case of our more general matrix specification of SIPH distributions, when $p=1$.

\end{remark}

\begin{remark}[Incorporating regressors] \rm

	As in the frailty model, we can introduce covariates into \eqref{eq:scaledint} via 
 \begin{align*}
	 	\lambda(t ; \Theta, \bfX) = \Theta \lambda(t) \exp( \vect{\beta} \bfX),\quad t\ge0.
	\end{align*}
In this case, we write $Y \sim \mbox{SIPH}(\bfpi, \bfT, \lambda, \Theta, \bfbeta)$ to denote a random variable with above intensity. Note that the proportional intensities model introduced in \cite{bladt2020survival} is retrieved if the scaling distribution degenerates to $\Theta\equiv1$ for all individuals.
Consequently, the SIPH model is a generalization of
the proportional intensities model. 
\end{remark}

In what follows, we mostly restrict ourselves to the model \eqref{eq:scaledint} without covariates, the extension being straightforward but somewhat distracting to the current train of thought. Moreover, we assume that $\Theta$ is a continuous random variable unless stated otherwise.

\subsection{Novel examples}\label{sec:example}

Next, we present a suite of new examples that arise naturally as matrix extensions of some well-known frailty models, providing along the way some insight into the precise asymptotic behavior of the proposed models. In Appendix \ref{ap:def}, the definitions of the different classes of heavy-tailed distributions are provided.
 
%We also show that the class of matrix Mittag-Leffler (time-fractional phase-type) distributions introduced in \cite{albrecher2019mml} are a particular case of this type of construction. 

 \begin{example}[Gamma scaling] \rm 
 		Consider $\Theta \sim \mbox{Gamma}(\alpha, 1)$, $\alpha>0$, with Laplace transform
 		\begin{align*}
 			\mathcal{L}_\Theta(u) = (1 + u)^{-\alpha}, \quad u\ge -1.
 		\end{align*}
	Then, the survival function $S_{Y}$ of $Y$ is given by
	\begin{align*}
		S_{Y}(y) =  \bfpi ( \mat{I} - h^{-1}(y)  \bfT)^{-\alpha} \bfe, \quad y\ge0. 
	\end{align*}
%	This implies the probability density function
%	\begin{align*}
%		f_{Y}(y) = \lambda(y) \alpha  \bfpi ( \mat{I} - h^{-1}(y) \bfT)^{-\alpha-1} \bft  \,. 
%	\end{align*}
	As for the matrix-Pareto type II laws introduced in \cite{albrecher2021cph}, taking more general $\Theta \sim \mbox{Gamma}(\alpha, \gamma)$, $\gamma >0$, results in the same class of distributions. For this reason, we work only with $\mbox{Gamma}(\alpha, 1)$.
Consider now the particular case $\lambda(y) = \eta y^{\eta - 1}$, $\eta>0$, then
	\begin{align*}
		S_{Y}(y) =  \bfpi ( \mat{I} - y^{\eta}  \bfT)^{-\alpha} \bfe \,. 
	\end{align*}
	We call this the {\em Matrix-Burr} distribution.
%	Note that 
%	\begin{align*}
%		S_{Y}(y) \sim C (h^{-1}(y) )^{-\alpha} \,, \quad y \to \infty \,.
%	\end{align*} 

Regarding the asymptotic behavior, we have that
\begin{align*}
	S_{Y}(y) \sim C (h^{-1}(y))^{-\alpha} \,,
\end{align*}
where $C$ is a positive constant, which follows from an eigenvalue decomposition of $\bfT$. The first-order precise asymptotics for the different intensities from Table~\ref{tab:trans} are provided in Table \ref{tab:gammaasym}, where $D$, $b$, and $c$ denote positive real-valued constants, which may change between intensities, but we write the same symbol for display purposes. Throughout the rest of this section, we use the same notational convention. 
\begin{table}[h]
\centering
\begin{tabular}{lccc}
  \hline
      Intensity              &   Precise asymptotics   &  Class  \\
  \hline
Pareto       & $D(\log(by))^{-\alpha}$ & Slowly varying  \\[2mm] 
Weibull      & $Dy^{-\alpha\eta}$    & Regularly varying \\ [2mm]
Lognormal    & $D(\log(y))^{-\alpha\eta}$    & Slowly varying \\ [2mm]
Loglogistic  & $D(\log(by))^{-\alpha}$     & Slowly varying    \\ [2mm]
Gompertz     & $D\exp( - b y )$     & Exponential      \\ 
   \hline
\end{tabular}
\caption{Asymptotics for Gamma scaling.}
\label{tab:gammaasym}
\end{table}
\end{example}

 \begin{example}[Positive stable scaling] \rm 
Consider  $\Theta$ positive stable with stability parameter $\alpha \in (0,1]$. Then
 	\begin{align*}
		S_{Y}(y) = \bfpi \exp( - (-\bfT)^{\alpha} (h^{-1}(y))^{\alpha}) \bfe, \quad y\ge0. 
	\end{align*}
As a particular case, take $\lambda(y) = \eta y^{\eta - 1}$, $\eta>0$. Then
\begin{align*}
		S_{Y}(y) = \bfpi \exp( - (-\bfT)^{\alpha} y^{\eta \alpha}) \bfe \,. 
\end{align*}
It was noted in \cite{albrecher2021cph} that $(\bfpi , - (-\bfT)^{\alpha})$ is a PH representation. Thus, some simple calculations show that these distributions span the same class as the matrix-Weibull laws introduced in \cite{albrecher2019inhomogeneous}. This is in contrast to the class of CPH distributions with stable mixing in \cite{albrecher2021cph}, which only span the matrix-Weibull laws with $\eta \in (0,1)$.  

Regarding their asymptotic behavior, we have
\begin{align*}
	S_{Y}(y) \sim C \exp(-b (h^{-1}(y))^{\alpha}) \,.
\end{align*}
Table \ref{tab:PSasym} gives the precise asymptotics for the different intensities of Table~\ref{tab:trans}.
\begin{table}[h]
\centering
\begin{tabular}{lccc}
  \hline
      Intensity              &   Precise asymptotics   &  Class  \\
  \hline
Pareto       & $ D \exp(-b (\log(cy))^{\alpha})$ &  Slowly varying \\[2mm]
Weibull      & $D \exp(-b y^{-\alpha\eta})$    & Weibull-type \\ [2mm]
Lognormal    & $D \exp(-b (\log(y))^{\alpha \eta})$    & \makecell{ Slowly varying for $\alpha \eta <1$ \\  Regularly varying for $\alpha\eta = 1$ \\  Lognormal-type for $\alpha\eta>1$ \\ [2mm]}  \\ 
Loglogistic  & $D \exp(-b (\log(cy))^{\alpha})$     & Slowly varying  \\[2mm] 
Gompertz     & $D \exp(-b \exp(c y))$     & Gumbel      \\ 
   \hline
\end{tabular}
\caption{Asymptotics for positive stable scaling.}
\label{tab:PSasym}
\end{table}
 \end{example}

\begin{example}[Inverse Gaussian scaling] \rm 
Consider inverse Gaussian scaling  with parameters $\nu >0$ and $\eta >0$ and density
\begin{align*}
	f_\Theta(\theta) = \frac{\sqrt{\eta}}{\sqrt{2 \pi \theta^3}} \exp \left( -\frac{\eta}{2 \nu^2 \theta} (\theta - \nu )^2 \right),\quad \theta>0.
\end{align*}
Then, the corresponding Laplace transform of $\Theta$ is given by
\begin{align*}
	\mathcal{L}_\Theta(u) = \exp \left( -\frac{\eta \sqrt{1 + 2 \nu ^2 u / \eta}}{\nu} + \frac{\eta}{\nu} \right),\quad u\ge0.
\end{align*}
We take the particular case $\nu = 1$ and $\sigma^2 = 1/\eta$. In this way 
\begin{align*}
	\mathcal{L}_\Theta(u) = \exp \left( \frac{1}{\sigma^2} \left(1 - \sqrt{1 + 2 \sigma ^2 u  }\right) \right) \,.
\end{align*}
Thus, 
\begin{align*}
	S_Y(y) =  \bfpi \exp \left( \frac{1}{\sigma^2} \left(\mat{I} - \sqrt{\mat{I} - 2 \sigma ^2  h^{-1}(y) \bfT  }\right) \right) \bfe, \quad y\ge0.
\end{align*}

Regarding the asymptotic behavior, we have that
\begin{align*}
	S_{Y}(y) \sim C \exp( -b (h^{-1}(y))^{1/2}) \,.
\end{align*}
Tables \ref{tab:IGasym} gives the precise asymptotics for the different intensities of Table~\ref{tab:trans}.
\begin{table}[h]
\centering
\begin{tabular}{lccc}
  \hline
      Intensity              &   Precise asymptotics   &  Class  \\
  \hline
Pareto       & $D\exp( -b (\log(cy))^{1/2})$ & Slowly varying  \\[2mm] 
Weibull      & $D\exp( -b y^{\eta/2})$    & Weibull-type  \\ [2mm]
Lognormal    & $D\exp( -b (\log(y))^{\eta/2})$    & \makecell{ Slowly varying for $ \eta <2$ \\  Regularly varying for $\eta = 2$ \\  Lognormal-type for $\eta>2$ \\ [2mm]} \\ [2mm]
Loglogistic  & $D\exp( -b (\log(cy))^{1/2})$     & Slowly varying    \\ [2mm]
Gompertz     & $D\exp( -b \exp( cy))$     & Gumbel      \\ 
   \hline
\end{tabular}
\caption{Asymptotics for inverse Gaussian scaling.}
\label{tab:IGasym}
\end{table}
\end{example}

\begin{example}[PVF scaling]\rm 
Consider the family of power variance function (PVF) distributions with Laplace transform 
\begin{align*}
	\mathcal{L}_{\Theta}(u) = \exp \left( \frac{\eta(1-\gamma)}{\gamma} \left( 1- \left( 1 + \frac{\nu u }{\eta (1- \gamma)} \right) ^{\gamma} \right) \right), \quad u\ge0,
\end{align*}
where $\nu >0 $, $\eta>0$ and $0 < \gamma \leq 1.$
This family includes the Gamma, inverse Gaussian and the positive stable distributions as particular cases. Here we assume that $\nu = 1$, which results in 
\begin{align*}
	S_Y(y) =  \bfpi \exp \left( \frac{\eta(1-\gamma)}{\gamma}\left(\mat{I} - \left({\mat{I} -    \frac{h^{-1}(y)}{\eta  (1 - \gamma)} \bfT  }\right)^{\gamma}\right) \right) \bfe,\quad y\ge0.
\end{align*}

Regarding the asymptotic behavior, we have that
\begin{align*}
	S_{Y}(y) \sim C \exp(-b(h^{-1}(y))^{\gamma}) \,,
\end{align*}
which results in the same asymptotics of Table~\ref{tab:PSasym} for the positive stable case, but with $\alpha$ replaced by $\gamma$.

\end{example}

 \begin{example}[Compound Poisson scaling] \rm 
 Consider a compound model $\Theta = \sum_{i = 1}^N V_i$ with $V_1, V_2, \dots $ i.i.d.\ random variables independent of $N$. 
	In general, the Laplace transform of $\Theta$ is given by 
	\begin{align*}
		\mathcal{L}_\Theta(u) = \mathcal{L}_N (- \log \mathcal{L}_V(u)),\quad u\ge0.
	\end{align*}
In particular, for $V\sim \mbox{Gamma}(\alpha, 1)$ and $N \sim \mbox{Poisson}(\rho)$, we obtain  
\begin{align*}
	\mathcal{L}_\Theta(u) = \exp \left( - \rho \left( 1- \left({1 + u} \right)^{-\alpha} \right) \right) \,.
\end{align*}
Thus,
\begin{align*}
	S_Y(y) = \bfpi \exp \left( - \rho \left( \mat{I}-  \left( \mat{I} - h^{-1}(y) \bfT \right)^{-\alpha} \right) \right) \bfe, \quad y\ge0.
\end{align*}

%As in the compound Poisson frailty model, one can extend this model further by assuming that $\rho$ is random. {\red write more about this?}
Note that this distribution has an atom at infinity with probability $\exp(-\rho)$, corresponding to the probability of $\P(N= 0)$. In survival analysis terms, this means that an individual may never experience the event of interest with such probability. Considering $N+1$ instead of $N$ removes such an atom.

\end{example}

%{\blue One can also construct Levy Frailty model, although they used more to see that other models come from this model and the proportional hazards can also be constructed in that way with the time playing the role of the multiplicative factor coming from the covariates. For subordinators the Laplace transform is given by 
%\begin{align*}
%	L(u; t) = \exp( - t \psi(u))
%\end{align*}
%$\psi$ continuous, nondecreasing, nonnegative, convex function. 
% }

\begin{example}[Discrete scaling] \rm 
Assume that $\Theta$ is a discrete random variable taking values in $ \{\eta_1, \eta_2, \dots \}\subset \mathbb{R}_+$ with corresponding probabilities $\bfalp = (\alpha_1, \alpha_2, \dots)$, that is, $\P(\Theta= \eta_i ) = \alpha_i$, $i=1,2,\dots$. Then, 
\begin{align*}
	S_Y(y) = \sum_i \alpha_i \bfpi \exp\left(\eta_i \bfT h^{-1}(y) \right) \bfe, \quad y\ge0.
\end{align*}
Define the linear transformation $\tilde{\bfT}$ on $\mathbb{R}^\mathbb{N}$ given by
\begin{align*}
	\tilde{\bfT} = \left( \begin{matrix}
  \bfT \eta_1 & \mat{0}& \cdots \\
  \mat{0} &\bfT \eta_2 & \cdots\\
  \vdots & \vdots & \ddots
\end{matrix} \right)\,.
\end{align*}
Then, we can rewrite the survival function of $Y$ as 
\begin{align*}
	S_Y(y) =  (\bfalp \otimes \bfpi) \exp\left( \tilde{\bfT} h^{-1}(y)  \right) \tilde{\bfe}, \quad y\ge0,
\end{align*}
where $\otimes $ denotes the Kronecker product, and $\tilde{\bfe} $ is a column vector of ones of appropriate dimension. This can be thought of as an infinite-dimensional IPH distribution. The case $\lambda \equiv 1$ recovers the class of NPH distributions introduced in \cite{bladt2015calculation}.

Note that another approach to study the asymptotic behavior, and that is particularly convenient in the discrete scaling case, is to use the representation $Y = h(Z/\Theta)$, so that 
\begin{align*}
	\P(Y >y ) = \P( Z/\Theta > h^{-1}(y)) = S_{Z/\Theta }(h^{-1}(y)) \,,
\end{align*}
and employ the asymptotics of $Z/\Theta$. For instance, taking $\Theta \sim \mbox{Gamma}(\alpha, 1)$, we have that $Z/\Theta$ is regularly varying with index $\alpha$ (see \cite{albrecher2021cph} for details). This leads to the same asymptotic results in Table~\ref{tab:gammaasym} for the different choices of intensities $\lambda$. For the discrete scaling, we could take, for instance, $\Theta$ with Zeta distribution leading to the same asymptotic results. 

%{\blue I did not put more general $V := 1/\Theta$ RV because the slowly varying function may affect the precise asymptotics. For instance, something like $x^{-\alpha}(\log(x))^{\beta}$}

As a second case, take $V := 1/\Theta$ with Weibull-type tail so that $V Z$ has Weibull-type tail with shape parameter in $(0,1)$ (see \cite{rojas2018asymptotic}). Thus, the asymptotic behavior for the different intensities resemble those in Table~\ref{tab:PSasym}.
%The the tail of $Y$ would be something like 
%\begin{align*}
%	\exp(-b (h^{-1}(y))^{\gamma})
%\end{align*}
%which is basically on the form of the positive stable case (in fact using positive stable you get Weibull tail, so it makes sense). 
\end{example}

\begin{example}[Missing covariates in the proportional intensities model] \rm 
Consider the proportional intensities model (also known as PH regression) introduced in \cite{bladt2020survival} with vectors of observed and unobserved covariates $\bfX_1$ and $\bfX_2$, respectively. Namely, the intensity is of the form
\begin{align*}
	\lambda(t; \bfX_1, \bfX_2) = \lambda(t) \exp ( \bfbeta_1 \bfX_1  +  \bfbeta_2 \bfX_2 ),\quad t\ge0.
\end{align*}
%Here $\bfbeta_2$ denotes the vector of regression parameters of unobserved covariates. 
 Given that the vector $\bfX_2$ is unknown, the model cannot be employed in practice. However, we can assume that 
\begin{align*}
	\Theta := \exp( \bfbeta_2 \bfX_2 )
\end{align*}
is an unobserved random variable independent of $\bfX_1$. In this way, the scaled intensity model can be employed to account for the effect of omitted covariates by considering a parametric model for $\Theta$. Such additional random component can thus help account for additional variability observed in data that cannot be explained by a simpler model.
\end{example}

 \subsection{Parameter estimation} \label{sec:siphest}
 In order to derive an EM algorithm for SIPH distribution, we first recall the corresponding algorithm for CPH distributions in \cite{albrecher2021cph} (see \cite{bladt2017fitting} for the discrete scaling case).
 Consider $y_1, \dots, y_K$ an i.i.d.\ sample from a CPH distributed random variable $Y$, which we will also denote by $\bfy$. Here, we assume that the scaling component $\Theta$ belongs to a parametric family depending on the parameter vector $\bfalp$ and denote by $f_\Theta$ its corresponding density.  We now make the following definitions. Let $B_k$ be the number of times the underlying Markov jump process of $Y$ starts
in state $k$, $N_{kl}$ the total number of transitions from state $k$ to $l$ until absorption, $N_k$ the number of
times that $k$ was the last state to be visited before absorption, and finally, let $Z_k$ be  the  cumulated time that
the Markov jump process spent in state $k$. The detailed routine for estimation of CPH distributions is given in Algorithm~\ref{alg:EMCPH}.
 
\begin{algorithm}[]
\caption{{EM algorithm for CPH distributions}}\label{alg:EMCPH}
 \begin{algorithmic}
 	\State \textit{\textbf{Input}}: Initialize with $( \bfpi,\bfT, \vect{\alpha})$.
	\begin{itemize}
	\item[ 1.] \textit{E-step:} Calculate

\begin{align*}
	& \E \left( B_{k} \mid \bfY = \bfy \right) 
	=\sum_{n=1}^{K} \int_0^\infty \frac{  \pi_k \, \bfe^{\top}_{k} \exp({\theta \bfT y_n }) \theta \bft }{f_{Y}(y_n)} \, f_\Theta(\theta) d\theta   \\[3mm]
	&\E \left( \Theta Z_{k} \mid \bfY = \bfy \right)
	 = \sum_{n=1}^{K} \int_0^\infty  \theta \frac{ \int^{y_n}_{0}  \bfe^{\top}_{k} \exp({\theta \bfT(y_n-u)})\theta \bft \bfpi \exp({\theta \bfT u })\bfe_{k}   du }{f_{Y}(y_n)}  f_\Theta(\theta) d\theta  \\[3mm]
	& \E \left( N_{kl} \mid \bfY = \bfy \right)
	 = \sum_{n=1}^{K} \int_0^\infty \theta t_{kl} \frac{ \int^{y_n}_{0}  \bfe^{\top}_{l} \exp({\theta \bfT(y_n-u)})\theta \bft \bfpi \exp({\theta \bfT u })\bfe_{k}   du }{f_{Y}(y_n)}  f_\Theta(\theta) d\theta \\[3mm]
	& \E \left( N_{k} \mid \bfY = \bfy \right)
	 = \sum_{n=1}^{K} \int_0^\infty \theta t_{k}   \frac{\bfpi \exp({\theta \bfT y_n })\bfe_{k}    }{f_{Y}(y_n)} f_\Theta(\theta) d\theta
\end{align*}

	\item[ 2.]  \textit{M-step:}  Let
	\begin{align*}
	\hat{\vect{\alpha}} &= \argmax_{\vect{\alpha}} \E \left( \log(f_\Theta(\Theta; \vect{\alpha}) ) \mid \bfY = \bfy \right) \\
	 &= \argmax_{\vect{\alpha}} \sum_{n=1}^{K} \int_0^\infty \log(f_\Theta(\theta; \vect{\alpha}) ) \,  \frac{ \bfpi \exp({\theta \bfT y_n}) \theta \bft \, }{f_{Y}(y_n)}f_\Theta(\theta) d\theta
	\end{align*}
	\begin{align*}
		&\hat{\pi}_{k} = \frac{\E\left( B_{k} \mid \bfY = \bfy \right)}{K}  
		\,, \quad
		\hat{t}_{kl} = \frac {\mathlarger{ \E\left( N_{kl} \mid \bfY = \bfy \right) }}{\mathlarger{ \E \left(\Theta Z_{k} \mid \bfY = \bfy \right) }}
		\,, \quad
		\hat{t}_{k} = \frac {\mathlarger{ \E\left( N_{k} \mid \bfY = \bfy \right) }} {\mathlarger{ \E\left( \Theta Z_{k} \mid \bfY = \bfy \right)}}
		\,, \\
		&\hat{t}_{kk} = -\sum_{l \neq k} \hat{t}_{kl} -\hat{t}_{k} \,.
	\end{align*}
	Let $\hat{\bfpi} = ( \hat{\pi}_{1}, \ldots , \hat{\pi}_{p} )$, $\hat{\bfT} = \{ \hat{t}_{kl} \}_{ k, l = 1, \ldots, p}$, and $\hat{\bft} = ( \hat{t}_{1}, \ldots, \hat{t}_{p} )^{ \top }$.
	
	\item[ 3.]  Assign $\vect{\alpha} = \hat{\vect{\alpha}} $, $\bfpi:=\hat{\bfpi}$, $\bfT :=\hat{\bfT}$, $ \bft :=\hat{\bft}$ and GOTO 1.
	 \end{itemize}
 	 \State \textit{\textbf{Output}}: Fitted parameters $( \bfpi,\bfT , \vect{\alpha} )$.
	 \end{algorithmic}
\end{algorithm}
 
We now derive a generalized EM algorithm for maximum-likelihood estimation of SIPH distributions. 
%The idea is inspired by the algorithm for IPH distributions introduced in \cite{albrecher2020fitting}, and it goes as follows.  
%Assume that $\Theta$ belongs to a parametric family depending on the parameter vector $\bfalp$ and that $\lambda(\,\cdot\, ; \vect{\eta})\ge0$ is a nonnegative parametric function depending on the vector $\vect{\eta}$. 
%Let $Y \sim \mbox{SIPH}(\bfpi, \bfT, \lambda(\,\cdot\, ; \vect{\eta}), \Theta, \bfbeta)$, then $$Y \eqd h(\exp(-\bfbeta \bfX ) Z /\Theta; \vect{\eta}),$$ where $Z \sim \mbox{PH}(\bfpi, \bfT)$. In particular, this implies that $h^{-1}(Y; \vect{\eta}) \exp(\bfbeta \bfX ) \eqd Z /\Theta$. Consider now $y_1,\dots,y_K$ an i.i.d.\ sample from this $Y$, which we will also denote by $\bfy$, then the EM algorithm for parameter estimation is the following. 
Assume that $\lambda(\,\cdot\, ; \vect{\eta})\ge0$ is a nonnegative parametric function depending on the vector $\vect{\eta}$. 
Let $Y \sim \mbox{SIPH}(\bfpi, \bfT, \lambda(\,\cdot\, ; \vect{\eta}), \Theta, \bfbeta)$, then $$Y \eqd h(\exp(-\bfbeta \bfX ) Z /\Theta; \vect{\eta}),$$ where $Z \sim \mbox{PH}(\bfpi, \bfT)$. In particular, this implies that $h^{-1}(Y; \vect{\eta}) \exp(\bfbeta \bfX ) \eqd Z /\Theta$, meaning that $h^{-1}(Y; \vect{\eta}) \exp(\bfbeta \bfX )$ is scaled PH distributed. Consider now $y_1,\dots,y_K$ an i.i.d.\ sample from this $Y$, then the EM algorithm for parameter estimation is the following.

 \begin{algorithm}[]
 \caption{{Generalized EM algorithm for SIPH distributions}}\label{alg:SIPH} 
 \begin{algorithmic}
 	\State \textit{\textbf{Input}}: Initialize with $( \bfpi,\bfT , \vect{\alpha}, \vect{\eta}, \bfbeta )$.
 	\begin{itemize}
 	\item[ 1.] Transform the data into $z_n=h^{-1}(y_n; \vect{\eta}) \exp(\bfbeta\bfX_n ) $, $n=1,\dots,K$, and 
 	apply the E- and M-steps of Algorithm~\ref{alg:EMCPH} by which we obtain the estimators $( \hat{\bfpi},\hat{\bfT}, \hat{\vect{\alpha}})$. 
 	
 	\item[ 2.] Compute  
 	\begin{align*}
 		(\hat{\vect{\eta}}, \hat{\bfbeta})  & = \argmax_{(\vect{\eta}, \bfbeta)} \sum_{n=1}^{K} \log (f_{Y}(y_n; \hat{\bfpi}, \hat{\bfT}, \hat{\vect{\alpha}}, \vect{\eta}, \bfbeta )) \,. %+ \sum_{i \,:\, y_i \,\text{censored}}^{N} \log (S_{Y}(y_i; \hat{\bfpi}, \hat{\bfT}, \hat{\vect{\alpha}}, \vect{\eta}, \bfbeta ))
 	\end{align*}
 	
 	\item[ 3.] Assign $(\vect{\pi},\mat{T},\vect{\alpha},\vect{\eta}, \bfbeta) =(\hat{\vect{\pi}},\hat{\mat{T}}, \hat{\vect{\alpha}}, \hat{\vect{\eta}}, \hat{\bfbeta})$ and GOTO 1. 
 	\end{itemize}
 	 \State \textit{\textbf{Output}}: Fitted parameters $( \bfpi,\bfT , \vect{\alpha}, \vect{\eta}, \bfbeta )$.
 \end{algorithmic}
 \end{algorithm}
 \begin{proposition}\label{prop:emconv}
 	Algorithm~\ref{alg:SIPH} increases the likelihood function at each iteration. Since for fixed $p$, the likelihood of SIPH distributions is bounded, {\color{black} convergence towards a (possibly local) maximum is guaranteed.}
 \end{proposition}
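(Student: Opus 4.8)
The plan is to show that Algorithm~\ref{alg:SIPH} is a valid instance of a generalized EM (GEM) scheme and then invoke the standard GEM monotonicity argument together with boundedness of the likelihood. First I would set up the complete-data likelihood for the SIPH model: the latent data are the full trajectory of the underlying Markov jump process (equivalently the summary statistics $B_k$, $N_{kl}$, $N_k$, $Z_k$) together with the scaling variable $\Theta$, while the observed data are $y_1,\dots,y_K$ (and covariates $\bfX_n$ if present). Writing the complete-data log-likelihood as a sum of a term depending on $(\bfpi,\bfT)$, a term depending on $\vect{\alpha}$ through $f_\Theta$, and a term involving the Jacobian $\lambda(y_n;\vect{\eta})\exp(\bfbeta\bfX_n)$ arising from the deterministic transformation $Y = h(\exp(-\bfbeta\bfX)Z/\Theta;\vect{\eta})$, one sees that the E-step amounts exactly to computing the conditional expectations listed in Algorithm~\ref{alg:EMCPH} applied to the transformed data $z_n = h^{-1}(y_n;\vect{\eta})\exp(\bfbeta\bfX_n)$, which are scaled-PH distributed by the Lemma preceding the Definition (and its covariate extension). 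This justifies Step~1.

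Next I would argue that Step~2 is a \emph{partial} M-step. The genuine M-step would jointly maximize the expected complete-data log-likelihood over all parameters $(\bfpi,\bfT,\vect{\alpha},\vect{\eta},\bfbeta)$ simultaneously; instead the algorithm first maximizes over $(\bfpi,\bfT,\vect{\alpha})$ with the transformation parameters held fixed (Step~1's M-substep), and then maximizes the \emph{observed} marginal log-likelihood $\sum_n \log f_Y(y_n;\hat\bfpi,\hat\bfT,\hat{\vect{\alpha}},\vect{\eta},\bfbeta)$ over $(\vect{\eta},\bfbeta)$ with the PH parameters held at their updated values (Step~2). The point is that each of these substeps does not decrease the relevant objective: the first is an exact conditional M-step, and the second directly increases the observed log-likelihood (since $(\hat{\vect{\eta}},\hat{\bfbeta})$ is an argmax that includes the previous value as a feasible point). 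By the usual decomposition $\ell(\theta') - \ell(\theta) = [Q(\theta'\mid\theta) - Q(\theta\mid\theta)] + [H(\theta\mid\theta) - H(\theta'\mid\theta)]$ with the entropy term $H(\theta\mid\theta)-H(\theta'\mid\theta)\ge 0$ by Gibbs' inequality, any move that does not decrease $Q$ (or, as in Step~2, directly does not decrease $\ell$ itself) does not decrease the observed log-likelihood $\ell$. Hence the likelihood is nondecreasing along iterations; strict increase holds away from stationary points.

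Finally I would combine monotonicity with boundedness: for fixed number of phases $p$, the SIPH likelihood is bounded above — this needs a short remark that the matrix-exponential/Laplace-transform density $f_Y$ cannot blow up, using that $\bfT$ has eigenvalues with strictly negative real part and that the parametric families for $\lambda(\cdot;\vect{\eta})$ and $f_\Theta$ are such that no degenerate ``spike'' likelihood configuration arises (the analogue of the well-known fact that PH and IPH likelihoods are bounded for fixed $p$). A bounded monotone sequence converges, so the likelihood values converge; standard GEM theory (Wu, 1983) then gives that limit points are stationary points of $\ell$, i.e. convergence to a possibly local maximum. The main obstacle I anticipate is the boundedness claim: unlike in a pure PH fit, here one must rule out pathologies coming jointly from the transformation $h(\cdot;\vect{\eta})$ and the scaling density $f_\Theta$ (e.g. $\vect{\eta}$ or $\vect{\alpha}$ drifting to a boundary where the density concentrates), so the cleanest route is to either restrict to parameter families where this is manifestly excluded or to cite the corresponding boundedness results for CPH and IPH and note that the transformation step composes them without introducing new singularities.
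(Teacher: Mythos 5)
Your proposal is correct and follows essentially the same route as the paper: decompose the observed log-likelihood via the change of variables $f_Y(y)=f_{Z/\Theta}(h^{-1}(y;\vect{\eta})\exp(\bfbeta\bfX);\bfpi,\bfT,\vect{\alpha})\,\lambda(y;\vect{\eta})\exp(\bfbeta\bfX)$, note that Step~1 is a partial M-step on the transformed data that cannot decrease the likelihood (the Jacobian terms being unaffected), and that Step~2 directly maximizes the observed log-likelihood over $(\vect{\eta},\bfbeta)$ with the previous value feasible. Your additional spelling-out of the $Q$/$H$ Gibbs-inequality argument and your caveats about boundedness and about convergence of likelihood values versus parameters go slightly beyond what the paper writes down (it simply invokes the CPH EM monotonicity and asserts boundedness in the statement), but they do not change the argument.
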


 \begin{proof}
 	By the change of variable theorem, we have that 
 	\begin{align*}
 		f_{Y}(y) %& = f_{Z/\Theta}(h^{-1}(y ; \vect{\eta}); \vect{\pi},\mat{T},\vect{\alpha}) \frac{d}{dy} h^{-1}(y;\vect{\eta}) \\
 		& = f_{Z/\Theta}(h^{-1}(y;\vect{\eta})\exp(\bfbeta \bfX ) ; \vect{\pi},\mat{T},\vect{\alpha}) \lambda (y;\vect{\eta}) \exp(\bfbeta \bfX ) ,\quad y\ge0.
 	\end{align*}
 	Consider parameter values $(\vect{\pi}_i,\mat{T}_i,\vect{\alpha}_i,\vect{\eta}_i, \bfbeta_i) $ after the $i$-th iteration. Then the data log-likelihood after the $i$-th iteration is given by 
 		\begin{eqnarray*} 
 		\lefteqn{ l( \vect{\pi}_i,\mat{T}_i,\vect{\alpha}_i,\vect{\eta}_i, \bfbeta_i ; \bfy, \bfX)} \\
 		& & = \sum_{n = 1}^{K} \log( f_{Z/\Theta}(h^{-1}(y_n;\vect{\eta}_i) \exp(\bfbeta_i \bfX_n ) ;\vect{\pi}_i,\mat{T}_i,\vect{\alpha}_i )) + \log( \lambda (y_n;\vect{\eta}_i) ) + \bfbeta_i\bfX_n  \,.
 	\end{eqnarray*}
 	
 	In the $(i + 1)$-th iteration, we first obtain $(\vect{\pi}_{i+1},\mat{T}_{i+1},\vect{\alpha}_{i+1})$ in 1. so that
 	\begin{eqnarray*} 
 		\lefteqn{l( \vect{\pi}_i,\mat{T}_i,\vect{\alpha}_i,\vect{\eta}_i, \bfbeta_i ; \bfy, \bfX)} \\
 		& & \leq \sum_{n = 1}^{K} \log( f_{Z/\Theta}(h^{-1}(y_n;\vect{\eta}_i) \exp(\bfbeta_i \bfX_n ) ;\vect{\pi}_{i+1},\mat{T}_{i+1},\vect{\alpha}_{i+1} )) + \log( \lambda (y_n;\vect{\eta}_i))+ \bfbeta_i \bfX_n \\
 		& & = l( \vect{\pi}_{i+1},\mat{T}_{i+1},\vect{\alpha}_{i+1},\vect{\eta}_{i}, \bfbeta_{i} ; \bfy, \bfX) \,.
 	\end{eqnarray*}
 	Finally, by 2.
 	 	\begin{align*}
 		l( \vect{\pi}_i,\mat{T}_i,\vect{\alpha}_i,\vect{\eta}_i, \bfbeta_i ; \bfy, \bfX)
 		& \leq  \argmax_{(\vect{\eta}, \bfbeta)}  l( \vect{\pi}_{i+1},\mat{T}_{i+1},\vect{\alpha}_{i+1},\vect{\eta}, \bfbeta ; \bfy, \bfX) \\
 		& =  l( \vect{\pi}_{i+1},\mat{T}_{i+1},\vect{\alpha}_{i+1},\vect{\eta}_{i+1}, \bfbeta_{i+1} ; \bfy, \bfX) \,.
 	\end{align*}
 \end{proof}
 
 \begin{remark}\rm
 The optimization problem
 \begin{align}\label{secon_optim_eq_SIPH}
 \argmax_{(\vect{\eta}, \bfbeta)} \sum_{n=1}^{K} \log (f_{Y}(y_n; \hat{\bfpi}, \hat{\bfT}, \hat{\vect{\alpha}}, \vect{\eta}, \bfbeta ))
 \end{align}
 of Algorithm \ref{alg:SIPH} is computationally heavy. However, observe that fewer iterations of any optimization routine {\color{black}are} sufficient for the proof and conclusion of Proposition \ref{prop:emconv} to hold, and full convergence of \eqref{secon_optim_eq_SIPH} is not necessary. For instance, one step of the $\argmax$ routine can already provide good results. \end{remark}
 
 \begin{remark}[Incorporating right-censoring]\rm

Algorithm~\ref{alg:SIPH} can be modified to work with censored data. We illustrate the changes by considering only the case of right-censoring since it is the most common scenario in survival analysis applications.
 However, left-censoring and interval-censoring can be treated by similar means. 
 In such a case, we no longer observe $Y = y$ but instead only that $Y \in [v, \infty)$.
 By monotonicity of $h$, we have that $h^{-1}(Y; \vect{\eta}) \exp(\bfbeta \bfX ) \in [h^{-1}(v; \vect{\eta}) \exp(\bfbeta \bfX ) , \infty )$, which can be interpreted as a censored observation of a scaled PH distributed random variable. Moreover, in \cite{albrecher2021cph} (and \cite{bladt2017fitting}), a modified EM algorithm for the estimation of scaled PH distributions is presented for the case of censored observations. This means that the main change in Algorithm~\ref{alg:SIPH} is in step 2, where we must now compute 
  	\begin{align*}
 		(\hat{\vect{\eta}}, \hat{\bfbeta})  
 		 = \argmax_{(\vect{\eta}, \bfbeta)} &\sum_{n \,:\, y_n \,\text{observed}}^{K} \log (f_{Y}(y_n; \hat{\bfpi}, \hat{\bfT}, \hat{\vect{\alpha}}, \vect{\eta}, \bfbeta )) \\
 		&  + \sum_{n \,:\, y_n \,\text{censored}}^{K} \log (S_{Y}(y_n; \hat{\bfpi}, \hat{\bfT}, \hat{\vect{\alpha}}, \vect{\eta}, \bfbeta )) \,.
 	\end{align*}
\end{remark}

\subsection{Estimation for fractional PH distributions}
 
 	A key distinction of the matrix Mittag-Leffler distribution (or fractional PH), with respect to the other models introduced in Section~\ref{sec:example}, is that the transformation $h(x) =  x^{1/\alpha}$ and the mixing distribution $\Theta = 1/S_{\alpha}^{\alpha}$ depend on the same parameter $\alpha$. This makes statistical estimation very challenging by ad-hoc methods, and thus embedding into the SIPH class is useful for this purpose. Note that the transformation parameters are different from the scaling component's parameters for the previously presented models, and this last scenario is the central assumption in the derivation of Algorithm ~\ref{alg:SIPH}. Thus, special treatment must be taken for the estimation of matrix Mittag-Leffler distributions when seen as SIPH distributions. This is now solved by employing a modified EM algorithm, the details given in Algorithm \ref{alg:MML}.
 \begin{algorithm}[]
 \caption{{EM algorithm for matrix Mittag-Leffler distributions}}\label{alg:MML} 
 \begin{algorithmic}
 	\State \textit{\textbf{Input}}:  Initialize with $( \bfpi,\bfT , \alpha )$.
 	\begin{enumerate}
 		
	\item Transform the data into $z_n=(y_n)^{\alpha}$, $n=1,\dots,K$, and 
 	apply the E- and M-steps of Algorithm~\ref{alg:EMCPH} with fixed $\alpha$ to obtain the estimators $( \hat{\bfpi},\hat{\bfT})$. 
 	\item Compute  
 	\begin{align*}
 		\hat{{\alpha}}  & = \argmax_{\alpha} \sum_{n=1}^{K} \log (f_{Y}(y_n; \hat{\bfpi}, \hat{\bfT}, \alpha )) \,.
 	\end{align*}
 	\item Assign $(\vect{\pi},\mat{T},{\alpha}) =(\hat{\vect{\pi}},\hat{\mat{T}}, \hat{{\alpha}})$ and GOTO 1.  
 	\end{enumerate}
 	 \State \textit{\textbf{Output}}: Fitted parameters $( \bfpi,\bfT , \alpha )$.
 \end{algorithmic}
 \end{algorithm}

By the same method of proof of Algorithm~\ref{alg:SIPH}, one can show that Algorithm \ref{alg:MML} increases the likelihood in each iteration, and hence we omit the details for brevity.

\section{Shared scaling}\label{sec:mult}
This section presents a multivariate extension of SIPH distributions, inspired by the construction principle of the shared frailty model. The key idea is to think of an underlying random variable which is a common scaling factor to all the coordinates of an independent random vector, creating dependency and heavy-tailedness all at once through the same mechanism.

\subsection{A class of multivariate CPH distributions}\label{sec:MCPH}
Before going into full generality, we consider the case where there is no deterministic time-transform component. This allows for a more transparent treatment with explicit formulas. Thus, consider the conditionally independent random variables $\bfY =(Y_1,\dots,Y_d)^{\top}$ given $\Theta=\theta$ such that 
\[  Y_i \mid \Theta = \theta \sim \mbox{PH}(\vect{\pi}_i,\theta \mat{T}_i) \,, \quad i = 1,\dots,d \,. \]
Then, the joint survival function of $\bfY $ is given by 
\begin{align*}
	S_{\bfY}(\bfy  )
	& = \int \P (Y_1>y_1,\dots,Y_d>y_d \mid  \Theta=\theta)dF_\Theta (\theta) \\
	& = \int \prod_{i=1}^d \vect{\pi}_i \exp\left({\theta \mat{T}_i y_i}\right)\vect{e}dF_\Theta (\theta) \\
	& = \int (\vect{\pi}_1\otimes\cdots \otimes \vect{\pi}_d )\exp\left({\theta (\mat{T}_1y_1\oplus\cdots\oplus\mat{T}_dy_d)}\right)\vect{e}\ dF_\Theta (\theta) \\
	& =   (\vect{\pi}_1\otimes\cdots \otimes \vect{\pi}_d ) \mathcal{L}_\Theta(-(\mat{T}_1y_1\oplus\cdots\oplus\mat{T}_d y_d)) \bfe, \quad y_i\ge0 \:\: i=1,\dots,d,
\end{align*}
%where $\oplus,\:\otimes $ denote the Kronecker sum and product, respectively.
where $\oplus, \otimes$ denote the Kronecker sum and product, respectively. 
In particular, this yields the joint density 
\begin{align*}
	f_{\bfY}(\bfy  )
	& =  (-1)^{d}  (\vect{\pi}_1\otimes\cdots \otimes \vect{\pi}_d ) \mathcal{L}_\Theta^{(d)}(-(\mat{T}_1y_1\oplus\cdots\oplus\mat{T}_d y_d)) \tilde{\bft} ,\quad y_i\ge0 \:\: i=1,\dots,d,
\end{align*}
%with $ \tilde{\bft} = \vect{t}_1\otimes\cdots \otimes \vect{t}_d$.
where $ \tilde{\bft} = \vect{t}_1\otimes\cdots \otimes \vect{t}_d$ and $ \mathcal{L}_\Theta^{(d)}(u)$ is the derivative of order $d$ of $\mathcal{L}_\Theta(u)$, which can again be shown by the use of functional calculus through Cauchy's formula.
Moreover, marginally we get continuously scaled PH behavior: 
\[  Y_i \sim \mbox{CPH}(\vect{\pi}_i,\mat{T}_i,\Theta) \,, \quad i = 1, \dots, d \,.  \]

Alternatively, it is easy to see that $\bfY$ has representation $(Y_1, \dots, Y_d)^{\top} = (Z_1, \dots ,Z_d)^{\top}/  \Theta $, where  $Z_i$ are independent $\mbox{PH}(\vect{\pi}_i,\mat{T}_i)$ distributed random variables independent of $\Theta$, $i =1, \dots, d$. Indeed, 
\begin{align*}
	\P(Y_1 > y_1, \dots ,Y_d >y_d ) &= \int \P(Y_1 > y_1, \dots ,Y_d >y_d \mid \Theta = \theta ) dF_\Theta(\theta) \\
	& = \int \P(Z_1 > \theta y_1, \dots ,Z_d>\theta y_d \mid \Theta = \theta ) dF_\Theta(\theta) \\
	& = \int \prod_{i=1}^d \vect{\pi}_i \exp\left({\theta \mat{T}_i y_i}\right)\vect{e} dF_\Theta(\theta) \\
	& =  S_{\bfY}(\bfy  ) \,.
\end{align*}

These multivariate distributions were studied from another perspective in \cite{furman2021multiplicative}, where the authors derived some properties in the context of risk management. We presently derive some probabilistic properties, provide an estimation method, and extend the class to allow for deterministic time transforms. In the next section we also allow for scaling of different components of the random vector by different (but correlated) scaling random variables. Since these distributions will be the building blocks of the more general time-inhomogeneous multivariate models presented in Section~\ref{subsec:MSIPH}, a good understanding of the former facilitates the {\color{black}treatment} of the latter.
%They called it the PH-MBR model.  Multiplicative background risk model with idiosyncratic risk factors distributed phase-type

%{\red They mentioned in \cite{furman2021multiplicative} that they future plans of research are study the dependence structure, estimation and model selection.
%Also gradient decent for estimation? 
% }

  \begin{example}[Gamma scaling] \rm 
Consider  $\Theta\sim \mbox{Gamma}(\alpha, 1)$, $\alpha>0$, then the joint survival function of $\bfY$ is given by
%\begin{align*}
%	S_{\bfY }(\bfy ) = (\vect{\pi}_1\otimes\cdots \otimes \vect{\pi}_n ) \left(\mat{I}-(\mat{T}_1 h_1^{-1}(y_1)\oplus\cdots\oplus\mat{T}_n h_n^{-1}(y_n))\right)^{-\alpha} \bfe \,.
%\end{align*}
%For $\lambda_i \equiv 1$
\begin{align*}
	S_{\bfY }(\bfy ) = (\vect{\pi}_1\otimes\cdots \otimes \vect{\pi}_d ) \left(\mat{I}-(\mat{T}_1 y_1\oplus\cdots\oplus\mat{T}_d y_d)\right)^{-\alpha} \bfe, \quad y_i\ge0 \:\: i=1,\dots,d.
\end{align*}
This distribution can be seen to be a matrix version of Mardia's multivariate Pareto distribution (see \cite{mardia1962multivariate}).  %In \cite{furman2021multiplicative} this distribution was employed in one of their illustrations.  
 
\end{example}

\subsection{Parameter estimation: multivariate CPH distributions}\label{subsec:MCPHest}

We now present a generalized EM algorithm for maximum-likelihood estimation of the class of multivariate CPH distributions introduced previously. 
%The method is based on the generalized EM algorithm for univariate CPH distributions in \cite{albrecher2021cph}.  
The complete data is the scaling component $\Theta$ together with the conditionally independent Markov jump processes paths. We further assume that $\Theta$ belongs to a parametric family depending on the vector $\bfalp$ and denote by $f_\Theta$ its corresponding density.

Consider observations $\bfy_n = (y_{n}^{(1)}, \dots, y_{n}^{(d)})^{\top}$, $n =1 ,\dots, K$, from a multivariate CPH distributed random vector, and let $\tilde{\bfy}$ denote the whole data set. 
We also denote by $\tilde{\bfpi}$ and $\tilde{\bfT}$ the sets of parameters $\{\bfpi_1, \dots, \bfpi_d\}$ and $\{\bfT_1,\dots, \bfT_d\}$, respectively, and $ \pi_{k}^{(i)}$ and $t_{kl}^{(i)}$ to refer to the entries of $\bfpi_i$ and $\bfT_i$, $i = 1, \dots, d$. In order to write down the complete likelihood $L_c(\tilde{\bfpi}, \tilde{\bfT},\vect{\alpha} ;\tilde{\bfy})$, we need the following definitions. 
For each $i = 1, \dots ,d$, 
let $B_k^i$ be the number of times the underlying Markov jump process of $Y_i$ starts
in state $k$, $N_{kl}^i$ the total number of transitions from state $k$ to $l$ until absorption, $N_k^i$ the number of
times that $k$ was the last state to be visited before absorption, and finally, let $Z_k^i$ be  the  cumulated time that
the Markov jump process spent in state $k$.

Then, the complete likelihood is given by
\begin{align*}
&L_c(\tilde{\bfpi}, \tilde{\bfT}, \vect{\alpha} ; \tilde{\bfy} ) \\ 
&\quad = f_\Theta(\theta; \vect{\alpha}) \prod_{i=1}^{d} \prod_{k=1}^{p_i}(\pi_{k}^{(i)})^{B_{k}^i}\prod_{k=1}^{p_i}\prod_{l=1, l\neq k}^{p_i} \left(\theta t_{kl}^{(i)}\right)^{N_{kl}^i}\exp\big( -\theta t_{kl}^{(i)}Z_{k}^i \big) \\
&\quad\quad\times\prod_{k=1}^{p_i}\left(\theta t_{k}^{(i)} \right)^{N_{k}^i}\exp\big(-\theta t_{k}^{(i)} Z_{k}^i \big)  \,,
\end{align*}
with corresponding log-likelihood (discarding the terms which do not depend on any parameters)
\begin{eqnarray*}
\lefteqn{ l_c(\tilde{\bfpi}, \tilde{\bfT},\vect{\alpha} ; \tilde{\bfy}) }\\
& &= \sum_{i=1}^{d} \sum_{k=1}^{p_i} {B_{k}^i} \log \left( \pi_{k}^{(i)} \right) +  \sum_{i=1}^{d} \sum_{k=1}^{p_i}\sum_{l=1, l\neq k}^{p_i} N_{kl}^i \log \left(t_{kl}^{(i)}\right)
- \sum_{i=1}^{d} \sum_{k=1}^{p_i}\sum_{l=1, l\neq k}^{p_i}{t_{kl}^{(i)} \theta Z_{k}^{i} } \\
& & \quad +   \sum_{i=1}^{d} \sum_{k=1}^{p_i} {N_{k}^i}\log \left(t_{k}^{(i)}\right)
- \sum_{i=1}^{d} \sum_{k=1}^{p_i} {t_{k}^{(i)} \theta Z_{k}^i } + \log(f_\Theta(\theta; \vect{\alpha}) )  \,.
\end{eqnarray*}

Regarding the E-step, which consists of computing the conditional expectation of the log-likelihood given the observed data, the calculations are somewhat similar to those of \cite{albrecher2021cph}. We illustrate the procedure by computing the conditional expectation of the logarithmic term. 
Consider one (generic) data point ($K = 1$) and let $\bfy = \bfy_1$. Then 
\begin{align*}
	\E \left[ \log(f_\Theta(\Theta; \vect{\alpha}) ) \mid \bfY=\bfy \right] & = \int_0^\infty \log(f_\Theta(\theta; \vect{\alpha}) ) f_{\Theta | \bfY} (\theta | \bfy) d\theta \\
	& =   \int_0^\infty \log(f_\Theta(\theta; \vect{\alpha}) ) \frac{f_{\Theta , \bfY} (\theta , \bfy)}{f_{\bfY}(\bfy)} d\theta \\
	& =   \int_0^\infty \log(f_\Theta(\theta; \vect{\alpha}) ) \frac{f_{ \bfY | \Theta } ( \bfy | \theta ) f_\Theta(\theta)}{f_{\bfY}(\bfy)} d\theta \\
	& = \int_0^\infty \log(f_\Theta(\theta; \vect{\alpha}) ) \frac{  \prod_{i =1 }^{d} \bfpi_i \exp({ \theta \bfT_i y^{(i)}}) \theta \bft_i }{f_{\bfY}(\bfy)} f_\Theta(\theta) d\theta \,.
\end{align*}
The formulas for all the other statistics are derived by similar calculations. 
%and can be found in Algorithm~\ref{alg:EMMCPH}.

%Concerning the M-step, consisting of maximizing the conditional expected log-likelihood in terms of the parameters, the computation is done in the same way as in \cite{albrecher2021cph}  {\color{black} Also give them, or at least the method?} and thus omitted for brevity. 
Concerning the M-step, consisting of maximizing the conditional expected log-likelihood in terms of the parameters, for the parameter $\bfalp$ of the scaling component we have in full generality 
\begin{align*}
	\hat{\vect{\alpha}} &= \argmax_{\vect{\alpha}} \E \left( \log(f_\Theta(\Theta; \vect{\alpha}) ) \mid \tilde{\bfY} = \tilde{\bfy}  \right) \,.
\end{align*}
Regarding the PH component's parameters, the entries of the sub-intensity matrix can be found by direct differentiation of the log-likelihood, while for the vector of initial probabilities, we can employ a Lagrange multiplier argument. We omit further details for brevity. 
We summarize the complete procedure in Algorithm \ref{alg:EMMCPH}.

\begin{algorithm}[]
\caption{Generalized EM algorithm for multivariate CPH distributions}\label{alg:EMMCPH}
\begin{algorithmic}
	\State \textit{\textbf{Input}}: Initialize with $( \bfpi_1,\dots,\bfpi_d, \bfT_1,\dots, \bfT_d, \vect{\alpha})$.
	\begin{enumerate} 
	\item[ 1.]\textit{E-step:}  For each $i =1 ,\dots, d$, calculate

\begin{align*}
	& \E \left( B_{k}^i \mid \tilde{\bfY} = \tilde{\bfy} \right) 
	=\sum_{n=1}^{K} \int_0^\infty \frac{  \pi_{k}^{(i)} \, \bfe^{\top}_{k} \exp({\theta \bfT_i y_{n}^{(i)} }) \theta \bft_i \prod_{j \neq i }^{d} \bfpi_j \exp({ \theta \bfT_j y_{n}^{(j)}}) \theta \bft_j }{f_{\bfY}(\bfy_n)} \, f_\Theta(\theta) d\theta   \\[3mm]
	&\E \left( \Theta Z_{k}^i \mid \tilde{\bfY} = \tilde{\bfy} \right)\\
	 &  = \sum_{n=1}^{K} \int_0^\infty  \theta \frac{ \int^{y_n^{(i)}}_{0}  \bfe^{\top}_{k} \exp({\theta \bfT_i(y_n^{(i)}-u)})\theta \bft_i \bfpi_i \exp({\theta \bfT_i u })\bfe_{k}   du \prod_{j \neq i }^{d} \bfpi_j \exp({ \theta \bfT_j y_n^{(j)}}) \theta \bft_j }{f_{\bfY}(\bfy_n)}  f_\Theta(\theta) d\theta  \\[3mm]
	& \E \left( N_{kl}^i \mid \tilde{\bfY} = \tilde{\bfy}\right) \\
	 &  =  \sum_{n=1}^{K} \int_0^\infty \theta t_{kl}^{(i)} \frac{ \int^{y_n^{(i)}}_{0}  \bfe^{\top}_{l} \exp({\theta \bfT_i(y_n^{(i)}-u)})\theta \bft_i \bfpi_i \exp({\theta \bfT_i u })\bfe_{k}   du \prod_{j \neq i }^{d} \bfpi_j  \exp({ \theta \bfT_j y_n^{(j)}}) \theta \bft_j }{f_{\bfY}(\bfy_n)}  f_\Theta(\theta) d\theta \\[3mm]
	& \E \left( N_{k}^i \mid \tilde{\bfY} = \tilde{\bfy} \right)
	 = \sum_{n=1}^{K} \int_0^\infty \theta t_{k}^{(i)}   \frac{\bfpi_i \exp({\theta \bfT_i y_n^{(i)} })\bfe_{k}   \prod_{j \neq i }^{d} \bfpi_j \exp({ \theta \bfT_j y_n^{(j)}}) \theta\, \bft_j  }{f_{\bfY}(\bfy_n)} f_\Theta(\theta) d\theta
\end{align*}

	\item[2.]\textit{M-step:} Let
	\begin{align*}
	\hat{\vect{\alpha}} &= \argmax_{\vect{\alpha}} \E \left( \log(f_\Theta(\Theta; \vect{\alpha}) ) \mid \tilde{\bfY} = \tilde{\bfy}  \right) \\
	 &= \argmax_{\vect{\alpha}} \sum_{n=1}^{K} \int_0^\infty \log(f_\Theta(\theta; \vect{\alpha}) )   \frac{ \prod_{ i = 1}^{d} \bfpi_i \exp({ \theta \bfT_i y_n^{(i)}}) \theta \bft_i }{f_{\bfY}(\bfy_n)}f_\Theta(\theta) d\theta
	\end{align*}
	and 
	\begin{align*}
		&\hat{\pi}_{k}^{(i)} = \frac{\E\left( B_{k}^i \mid \tilde{\bfY} = \tilde{\bfy}  \right)}{K}  
		\,, \quad
		\hat{t}_{kl}^{(i)} = \frac {\mathlarger{ \E\left( N_{kl}^i \mid \tilde{\bfY} = \tilde{\bfy}  \right) }}{\mathlarger{ \E \left(\Theta Z_{k}^i \mid \tilde{\bfY} = \tilde{\bfy}  \right) }}
		\,, \quad
		\hat{t}_{k}^{(i)} = \frac {\mathlarger{ \E\left( N_{k}^i \mid \tilde{\bfY} = \tilde{\bfy}  \right) }} {\mathlarger{ \E\left( \Theta Z_{k}^i \mid \tilde{\bfY} = \tilde{\bfy}  \right)}}
		\,, \\
		&\hat{t}_{kk}^{(i)} = -\sum_{l \neq k} \hat{t}_{kl}^{(i)} -\hat{t}_{k}^{(i)} \,, \quad i = 1, \dots, d\,.
	\end{align*}
	Let $\hat{\bfpi}_i = ( \hat{\pi}_{1}^{(i)}, \ldots , \hat{\pi}_{p_i}^{(i)} )$, $\hat{\bfT}_i = \{ \hat{t}_{kl}^{(i)} \}_{ k, l = 1, \ldots, p_i}$, and $\hat{\bft}_i = ( \hat{t}_{1}^{(i)}, \ldots, \hat{t}_{p_i}^{(i)} )^{ \top }$, $i =1 ,\dots, d$.
	
	 \item[3.]Assign $\vect{\alpha} = \hat{\vect{\alpha}} $, $\bfpi_i:=\hat{\bfpi}_i$, $\bfT_i :=\hat{\bfT}_i$, $ \bft_i :=\hat{\bft}_i$, $i =1 ,\dots, d$, and GOTO 1.
	 \end{enumerate} 
	  \State \textit{\textbf{Output}}: Fitted parameters $( \bfpi_1,\dots,\bfpi_d, \bfT_1,\dots, \bfT_d, \vect{\alpha})$.
\end{algorithmic}
\end{algorithm}

\subsection{A class of multivariate SIPH distributions}\label{subsec:MSIPH}
We now proceed to incorporate deterministic time-inhomogeneity into the shared scaling construction. Consider conditionally independent random variables $(Y_1,\dots,Y_d)^{\top}$ given $\Theta=\theta$ by 
\[  Y_i \mid \Theta = \theta \sim \mbox{IPH}(\vect{\pi}_i,\mat{T}_i, \theta \lambda_i) \,, \quad i = 1, \dots, d \,.  \]
Then 
\begin{align*}
	S_{\bfY }(\bfy ) &=\int \P (Y_1>y_1,\dots,Y_d>y_d \mid \Theta=\theta)dF_\Theta (\theta) \\
	&= \int \prod_{i=1}^d \vect{\pi}_i \exp\left({\theta \mat{T}_i h_i^{-1}(y_i)}\right)\vect{e}dF_\Theta (\theta) \\
	&= \int (\vect{\pi}_1\otimes\cdots \otimes \vect{\pi}_d ) \exp\left({\theta (\mat{T}_1 h_1^{-1}(y_1)\oplus\cdots\oplus\mat{T}_d h_d^{-1}(y_d))}\right)\vect{e} dF_\Theta (\theta) \\
	& =   (\vect{\pi}_1\otimes\cdots \otimes \vect{\pi}_d ) \mathcal{L}_\Theta(-(\mat{T}_1 h_1^{-1}(y_1)\oplus\cdots\oplus\mat{T}_d h_d^{-1}(y_d))) \bfe, \:\:y_i\ge0,\:\: i=1,\dots,d,
\end{align*}
and 
\begin{align*}
	f_{\bfY }(\bfy ) 
	& =  \left(\prod_{i =1 } ^ d -\lambda_i(y_i) \right) (\vect{\pi}_1\otimes\cdots \otimes \vect{\pi}_d ) \mathcal{L}_{\Theta}^{(d)}(-(\mat{T}_1 h_1^{-1}(y_1)\oplus\cdots\oplus\mat{T}_d h_d^{-1}(y_d))) \tilde{\bft} \,,
\end{align*}
 where $h^{-1}_i (y) = \int_0^{y} \lambda_i(t) dt$, $i = 1, \dots, d$.
Note that $\bfY$ has representation $(Y_1, \dots ,Y_d)^{\top} = (h_1(Z_1/\Theta), \dots ,h_d(Z_d/\Theta))^{\top}$, which can be seen as follows 
\begin{align*}
	 \P(Y_1 > y_1, \dots ,Y_d>y_d  ) &= \int \P(Y_1 > y_1, \dots ,Y_d>y_d \mid \Theta = \theta ) dF_\Theta(\theta) \\
	& = \int \P(Z_1 > \theta h_1^{-1}(y_d), \dots ,Z_d>\theta h_d^{-1}(y_d) \mid \Theta = \theta ) dF_\Theta(\theta) \\
	& = \int \prod_{i=1}^d \vect{\pi}_i \exp\left({\theta \mat{T}_i h_i^{-1}(y_i)}\right)\vect{e} dF_\Theta(\theta) \\ 
	&  = S_{\bfY }(\bfy ) \,.
\end{align*}

\begin{example}[Positive stable scaling] \rm 

Take  $\Theta$ positive stable with stability parameter $\alpha \in (0, 1]$, then 
\begin{align*}
	S_{\bfY }(\bfy ) = (\vect{\pi}_1\otimes\cdots \otimes \vect{\pi}_d ) \exp\left(-(-\mat{T}_1 h_1^{-1}(y_1)\oplus\cdots\oplus\mat{T}_d h_d^{-1}(y_d))^{\alpha}\right) \bfe \,.
\end{align*}
For the particular case $\lambda_i(y) \equiv \eta_i y^{\eta_i - 1} $, $\eta_i>0$, $i = 1, \dots, d$, we have
\begin{align*}
	S_{\bfY }(\bfy ) = (\vect{\pi}_1\otimes\cdots \otimes \vect{\pi}_d ) \exp\left(-(-\mat{T}_1 y_1^{\eta_1}\oplus\cdots\oplus\mat{T}_d y_d^{\eta_d})^{\alpha}\right) \bfe \,.
\end{align*}
This joint distribution can be seen to be a matrix-parameter version of the multivariate Weibull distribution introduced in \cite{manatunga1999parametric}. 
 
\end{example}

\begin{remark} \rm 
	Covariates can be incorporated into the model by assuming that the intensities are of the form
	 \begin{align*}
	 	\lambda_i(t ; \Theta, \bfX) = \Theta \lambda_i(t) \exp(\vect{\beta}\bfX ), \quad t\ge0, \quad i=1,\dots,d.
	\end{align*}
\end{remark}

\begin{remark}[Shared frailty model]\rm
In the shared frailty model, it is assumed that a group of individuals is conditionally independent given the frailty. In this way, the conditional joint survival function of $\bfY \mid \Theta=\theta$, $\bfY = (Y_1, \dots, Y_d)^{\top}$, is given by 
\begin{align*}
	S_{\bfY | \Theta}(\bfy | \theta) & = \P (Y_1>y_1,\dots,Y_d>y_d \mid \Theta = \theta) \\
	& =\prod_{i = 1 }^{d} \exp(- \theta M_i(y_i)) \\
	& = \exp \left( -\theta \sum_{i =1}^{d} M_i(y_i) \right) \,,
\end{align*}
where $M_i$ are baseline cumulative hazards, $i = 1,\dots, d$. Thus, the joint survival function of $\bfY$ is given by 
\begin{align*}
	S_{\bfY }(\bfy ) = \mathcal{L}_\Theta \left( \sum_{i =1}^{d} M_i(y_i) \right) \,.
\end{align*}
Using that 
\begin{align*}
	M_i(y) = \mathcal{L}_\Theta^{-1}(S_{Y_i}(y)) \,, \quad i = 1, \dots, d ,
\end{align*}
the above joint survival function can be rewritten as 
\begin{align*}
	S_{\bfY }(\bfy ) = \mathcal{L}_\Theta \left( \sum_{i =1}^{d} \mathcal{L}_\Theta^{-1}(S_{Y_i}(y_i)) \right) \,.
\end{align*}
In particular, this means that the survival copula of $\bfY$ is an Archimedean copula. 
%The multivariate extension of CPH distributions naturally subsumes this construction. 
Note that the shared frailty model is a particular case of the class of multivariate SIPH distributions introduced here when $p = 1$.
\end{remark}

We now study the dependence structure of multivariate SIPH distributions. When $p = 1$, the survival copula of $\bfY$ is an Archimedean copula. To study the more general case, note that all the transformations presented in Table~\ref{tab:trans} are strictly increasing. This means that the copulas for models based on these intensities are the same as the ones of the models presented in Section~\ref{sec:MCPH}, and thus it is enough to study the later case. 
Define the coefficient of upper tail dependence as
    \begin{align*}
	\lambda_U (\bfY) = \lim_{q \to 1^{-}} {\P(Y_1 > F_{Y_1}^{\leftarrow}(q) \mid Y_2 > F_{Y_2}^{\leftarrow}(q)) } \,.
	\end{align*}
\begin{proposition}
Let    $V := 1/ \Theta$ be regularly varying with index $\alpha>0$. Then
\begin{align*}
	\lambda_U (\bfY) & = \Gamma(\alpha + 1) (\bfpi_1 \otimes \bfpi_2)(-\tilde{\bfT}_1\oplus \tilde{\bfT}_2 )^{-\alpha} \bfe \, ,
\end{align*}
where $\tilde{\bfT}_i := \bfT_i \E(Z_i^\alpha)^{1/\alpha}$, $i =1,2$.
\end{proposition}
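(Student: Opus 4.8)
My plan is to reduce to the no-time-transform case and then run a Breiman-type tail computation. Since the upper tail dependence coefficient $\lambda_U$ is a functional of the survival copula of $\bfY$, and — as observed just before the statement — each time transform $h_i$ is strictly increasing, we may assume $\lambda_i\equiv 1$, so that $Y_i = Z_i/\Theta = Z_i V$ with $Z_i\sim\mbox{PH}(\bfpi_i,\bfT_i)$ independent of each other and of $V:=1/\Theta$, the latter regularly varying with index $\alpha$. Writing $u_i:=F_{Y_i}^{\leftarrow}(q)$, so that $S_{Y_i}(u_i)=1-q$ (the marginals being continuous), I must evaluate $\lambda_U(\bfY)=\lim_{q\uparrow 1}\P(Y_1>u_1,\,Y_2>u_2)/(1-q)$. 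Because each $Z_i$ has finite moments of all positive orders, Breiman's lemma applied to $Y_i=Z_iV$ gives $S_{Y_i}(x)\sim\E(Z_i^\alpha)\,S_V(x)$ as $x\to\infty$, so $S_{Y_i}$ is regularly varying with index $-\alpha$; from $S_{Y_1}(u_1)=S_{Y_2}(u_2)$ it then follows that $u_i\to\infty$, that $u_1/u_2\to\kappa:=(\E(Z_1^\alpha)/\E(Z_2^\alpha))^{1/\alpha}$, and that $1-q=S_{Y_2}(u_2)\sim\E(Z_2^\alpha)\,S_V(u_2)$.

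For the joint tail I would condition on the pair $(Z_1,Z_2)$ and write $\P(Y_1>u_1,Y_2>u_2)=\E\big[\,S_V(\max(u_1/Z_1,\,u_2/Z_2))\,\big]$. Using the uniform convergence theorem for regularly varying functions, with Potter bounds supplying an integrable majorant (legitimate precisely because the $Z_i$ have all positive moments finite), together with $u_1/u_2\to\kappa$, this is asymptotic to $S_V(u_2)\,\E\big[\min((Z_1/\kappa)^\alpha,\,Z_2^\alpha)\big]$. Dividing by $1-q\sim\E(Z_2^\alpha)S_V(u_2)$ and substituting $\kappa$ yields the copula-level identity
\[ \lambda_U(\bfY)=\E\Big[\min\Big(\frac{Z_1^\alpha}{\E(Z_1^\alpha)},\,\frac{Z_2^\alpha}{\E(Z_2^\alpha)}\Big)\Big]=\E\big[\big(\min(Z_1/c_1,\,Z_2/c_2)\big)^\alpha\big],\qquad c_i:=\E(Z_i^\alpha)^{1/\alpha}. \]

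Finally I would translate this into matrix form. The scaled variable $Z_i/c_i$ is $\mbox{PH}(\bfpi_i,\,c_i\bfT_i)=\mbox{PH}(\bfpi_i,\,\tilde\bfT_i)$, and the minimum of two independent phase-type variables is again phase-type with Kronecker-sum sub-intensity matrix, so $\min(Z_1/c_1,Z_2/c_2)\sim\mbox{PH}(\bfpi_1\otimes\bfpi_2,\ \tilde\bfT_1\oplus\tilde\bfT_2)$. It then remains to invoke the fractional-moment identity for phase-type laws: for $W\sim\mbox{PH}(\bfalp,\bfB)$,
\[ \E(W^\alpha)=\alpha\!\int_0^\infty\! x^{\alpha-1}\,\bfalp\exp(\bfB x)\bfe\,dx=\Gamma(\alpha+1)\,\bfalp(-\bfB)^{-\alpha}\bfe, \]
where the matrix Gamma integral $\int_0^\infty x^{\alpha-1}\exp(\bfB x)\,dx=\Gamma(\alpha)(-\bfB)^{-\alpha}$ is valid since $-\bfB=(-\tilde\bfT_1)\oplus(-\tilde\bfT_2)$ has spectrum in the open right half-plane. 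Taking $\bfalp=\bfpi_1\otimes\bfpi_2$ and $\bfB=\tilde\bfT_1\oplus\tilde\bfT_2$ gives $\lambda_U(\bfY)=\Gamma(\alpha+1)(\bfpi_1\otimes\bfpi_2)(-\tilde\bfT_1\oplus\tilde\bfT_2)^{-\alpha}\bfe$.

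The technical heart, and the main obstacle, is the rigorous justification of the two regular-variation limit interchanges — the Breiman step for the marginals and the normalising level, and the conditional-expectation step for the joint tail — each of which requires exhibiting an integrable Potter-type dominating function; that all moments of the $Z_i$ are finite makes this routine but it is the only genuine work. Minor additional points are checking that $u_1/u_2$ actually converges (its subsequential limits are bounded away from $0$ and $\infty$ and each solves $\lambda^{-\alpha}=\E(Z_2^\alpha)/\E(Z_1^\alpha)$) and recalling the two standard phase-type facts used at the end: closure under minima and the fractional-moment formula.
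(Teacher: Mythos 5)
Your argument is correct, and its second half (rescaling $Z_i$ by $c_i=\E(Z_i^\alpha)^{1/\alpha}$, closure of phase-type laws under independent minima via the Kronecker sum, and the fractional-moment identity $\E(W^\alpha)=\Gamma(\alpha+1)\bfalp(-\mat{B})^{-\alpha}\bfe$) coincides step for step with the paper's proof. The difference is in how the pivot identity
\begin{align*}
\lambda_U(\bfY)=\E\left(\min\left(\frac{Z_1^\alpha}{\E(Z_1^\alpha)},\frac{Z_2^\alpha}{\E(Z_2^\alpha)}\right)\right)
\end{align*}
is obtained: the paper simply invokes Proposition 1 of Section 2 of Engelke et al.\ (2018) for random scale mixtures with a regularly varying factor, whereas you rederive it from scratch via Breiman's lemma for the marginals, the identification of the limit of $u_1/u_2$ through subsequential arguments, and a dominated-convergence pass on $\E\bigl[S_V(\max(u_1/Z_1,u_2/Z_2))\bigr]$ justified by Potter bounds (which is legitimate here because phase-type variables have all positive moments finite). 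Your reduction to the case $\lambda_i\equiv 1$ via copula invariance under strictly increasing marginal transforms is also exactly the remark the paper makes just before the statement. So the route is essentially the paper's, with the cited external lemma replaced by a self-contained regular-variation derivation; what your version buys is transparency about precisely which hypotheses ($V$ regularly varying, light-tailed $Z_i$) drive the formula, at the cost of the technical bookkeeping you correctly flag as the only genuine work.
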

\begin{proof}
Given the definition of our model, Proposition 1 of Section 2 in \cite{engelke2018extremal} yields
\begin{align*}
	\lambda_U (\bfY) = \E \left( \min \left( \frac{Z_1^\alpha}{\E(Z_1^\alpha)} , \frac{Z_2^\alpha}{\E(Z_2^\alpha)} \right) \right) \,,
\end{align*}
where $Z_i$ are PH($\bfpi_i,\bfT_i$), and
\begin{align*}
	\E(Z_i^\alpha) = \Gamma(\alpha + 1) \bfpi_i(-{\bfT}_i )^{-\alpha} \bfe \,, \quad i=1,2 \,.
\end{align*}
Moreover, ${Z_i}/{\E(Z_i^\alpha)^{1/\alpha}}$ is PH distributed with the same vector of initial probabilities $\bfpi_i$ and sub-intensity matrix $\tilde{\bfT}_i = \bfT_i \E(Z_i^\alpha)^{1/\alpha} $, $i =1 ,2$. This implies that 
\begin{align*}
	\min \left( \frac{Z_1}{\E(Z_1^\alpha)^{1 / \alpha}} , \frac{Z_2}{\E(Z_2^\alpha)^{1 / \alpha}} \right)  \sim \mbox{PH} (\bfpi_1 \otimes \bfpi_2, \tilde{\bfT}_1\oplus \tilde{\bfT}_2) \,,
\end{align*}
which now yields
\begin{align*}
	\lambda_U (\bfY) & = \Gamma(\alpha + 1) (\bfpi_1 \otimes \bfpi_2)(-\tilde{\bfT}_1\oplus \tilde{\bfT}_2 )^{-\alpha} \bfe \,.
\end{align*}
\end{proof}
 Note that the resulting explicit expression for $\lambda_U$ is in terms of the parameters of the PH components. For instance, when considering $\Theta \sim \mbox{Gamma}(\alpha,1)$, the survival copula of the model can be different from the Clayton copula, for which $\lambda_U = 2^{-\alpha}$. In Figure~\ref{fig:copula}, we take the same value $\alpha = 1$ and plot the implicit copula of two multivariate CPH distributions, one with upper tail dependence coefficient smaller than $2^{-1}$ and the other larger than $2^{-1}$, achieved solely by changing the parameters of the PH components. 
 
\begin{figure}[h]
\centering
\includegraphics[width=0.49\textwidth]{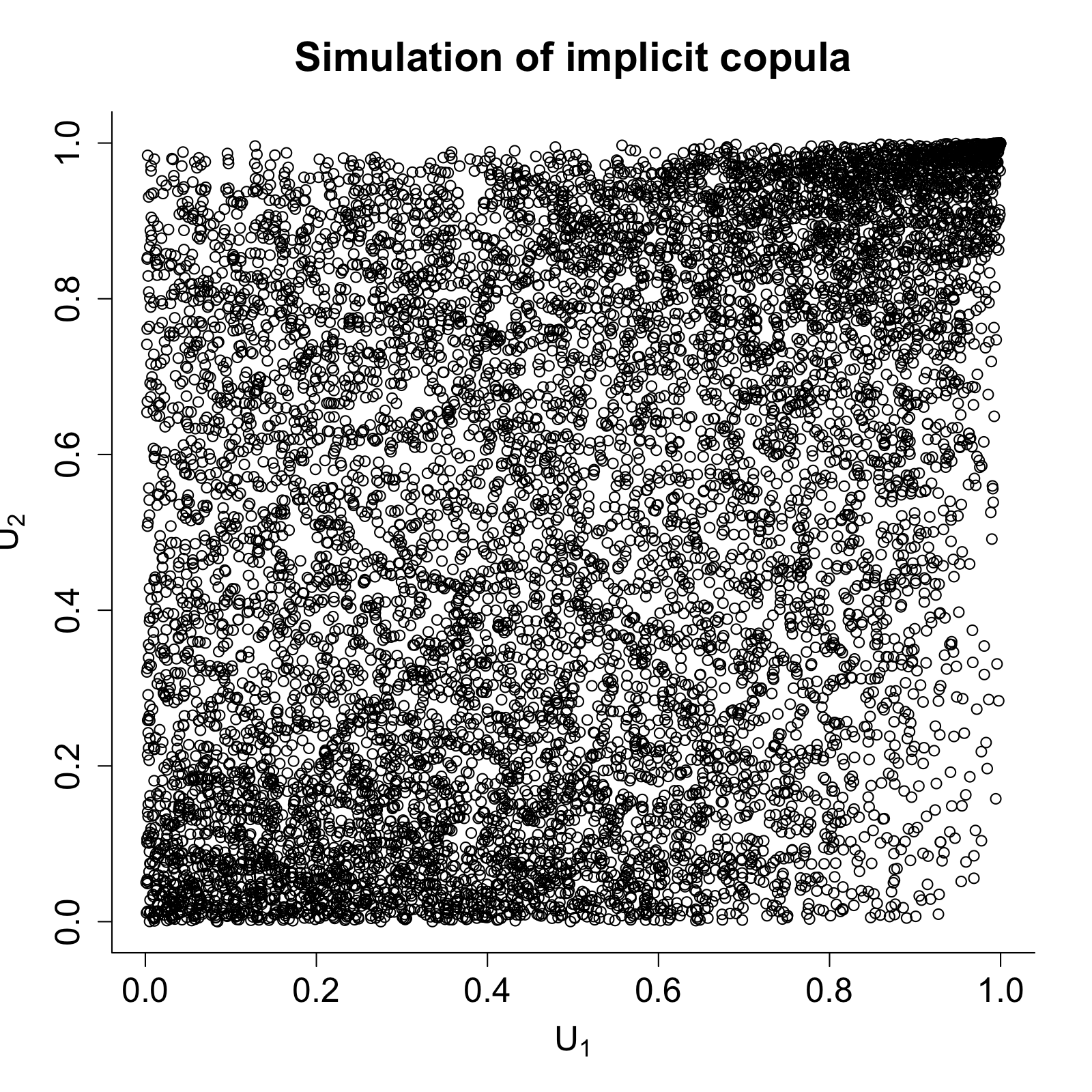}
\includegraphics[width=0.49\textwidth]{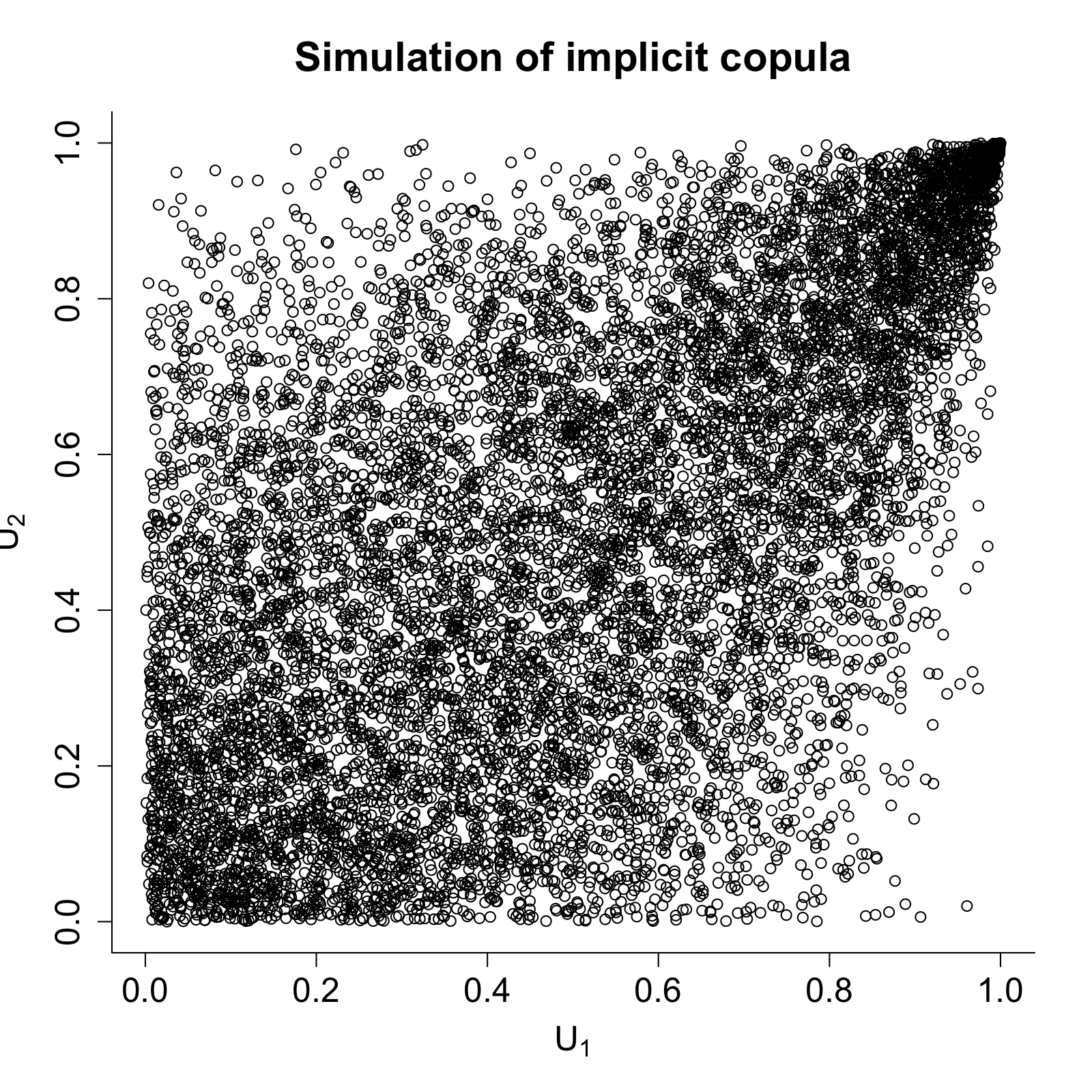}
\caption{Simulation of implicit copulas of multivariate SIPH with $\lambda_U = 0.4128$ (left), and  multivariate SIPH with $\lambda_U = 0.5659$ (right).}
\label{fig:copula}
\end{figure}

%{\red First impression from Proposition 8 in Section 3 of \cite{engelke2018extremal}, it seems that by considering $V$ Weibull-type, we will obtain  $\lambda_U = 0$. The proposition does not apply directly since they consider $Z_1 \eqd Z_2$. I think we must use that the Eigenvalues of the Kronecker sum is the sum of Eigenvalues. Numerical analysis indicates that $\lambda_U>0$ when the parameter of the Weibull is less and 1 and $\lambda_U = 0$ otherwise. {\color{blue} New proposition or delete?} }

\subsection{Parameter estimation: multivariate SIPH distributions}
If we assume that $\lambda_i(\,\cdot\, ; \vect{\eta}_i)$ is a parametric function depending on the vector $\vect{\eta}_i$, $i =1 ,\dots, d$, and let $\vect{\eta} = (\vect{\eta}_1, \dots, \vect{\eta}_d)$. Then we can use that  $(h^{-1}_1(Y_1 ; \vect{\eta}_1), \dots, h^{-1}_d(Y_d;\vect{\eta}_d))^{\top} \eqd (Z_1/\Theta, \dots ,Z_d/\Theta)^{\top} $ to  formulate a generalized EM algorithm for maximum-likelihood estimation, which generalizes Algorithm~\ref{alg:SIPH} to the multivariate case.

 \begin{algorithm}
 \caption{Generalized EM for multivariate SIPH distributions}\label{alg:MSIPH}
 	\begin{algorithmic}
 	\State \textit{\textbf{Input}:} Initialize with $( \bfpi_1, \dots,  \bfpi_d,\bfT_1, \dots, \bfT_d , \vect{\alpha}, \vect{\eta})$.
 	\begin{enumerate} 
 	\item[ 1.] Transform the data into $z_n^{(i)}=h_i^{-1}(y_n^{(i)}; \vect{\eta}_i)  $,  $i=1,\dots,d$, $n=1,\dots,K$, and 
 	apply the E- and M-steps of Algorithm~\ref{alg:EMMCPH} by which we obtain the estimators $( \hat{\bfpi}_1, \dots, \hat{\bfpi}_d,\hat{\bfT}_1, \dots, \hat{\bfT}_d, \hat{\vect{\alpha}})$.
 	
 	\item[ 2.] Compute  
 	\begin{align*}
 		\hat{\vect{\eta}}  & = \argmax_{\vect{\eta}} \sum_{n=1}^{K} \log (f_{\bfY}(\bfy_n; \hat{\bfpi}_1,\dots,\hat{\bfpi}_d,  \hat{\bfT}_1,\dots, \hat{\bfT}_d, \hat{\vect{\alpha}}, \vect{\eta})) \,.
 	\end{align*}
 	
 	\item[ 3.] Assign $(\vect{\pi}_1,\dots,\vect{\pi}_d,\mat{T}_1,\dots,\mat{T}_d,\vect{\alpha},\vect{\eta}) =(\hat{\bfpi}_1,\dots,\hat{\bfpi}_d,  \hat{\bfT}_1,\dots, \hat{\bfT}_d, \hat{\vect{\alpha}}, \hat{\vect{\eta}})$ and GOTO 1. 
 	\end{enumerate} 
 	\State \textit{\textbf{Output}}: Fitted parameters $( \bfpi_1, \dots,  \bfpi_d,\bfT_1, \dots, \bfT_d , \vect{\alpha}, \vect{\eta})$.
 \end{algorithmic}
 \end{algorithm}

\section{Correlated  scaling}\label{sec:correlated}

We now extend the scaling of the sub-intensity matrix of SIPH distributions to the case where we condition on a random vector, the scaling factors being the components of such vector. We consider first the conditionally PH case, i.e. when no deterministic time-transform is present, and a scaling vector $\vect{\Theta} = (\Theta_1, \dots,\Theta_d)^{\top}$ and $\bfY = (Y_1,, \dots, Y_d)^{\top}$ such that the random variables $Y_i$ are conditionally independent given $\vect{\Theta}$ with laws
\begin{align*}
	Y_i \mid \vect{\Theta} = (\theta_1, \dots,\theta_d)^{\top} \sim \mbox{PH}(\bfpi_i, \theta_i\bfT_i)\,, \quad  i = 1, \dots, d \,.
\end{align*}
Then, in full generality, the joint survival function of $\bfY$ is given by 
\begin{align*}
	S_{\bfY}(\bfy) = \int \prod_{i = 1}^{d} \bfpi_i \exp\left({\theta_i \bfT_i y_i}\right) \bfe  dF_{\vect{\Theta}}(\bftheta),\quad y_i\ge0,\:\:i=1,\dots,d.
\end{align*}
Consider the bivariate case. Then, using functional calculus, we have that that the joint survival function takes the explicit form
\begin{align*}
	S_{\bfY}(\bfy) & = \int \bfpi_1 \exp\left({\theta_1 \bfT_1 y_1}\right) \bfe \bfpi_2 \exp\left({\theta_2 \bfT_2 y_2}\right) \bfe  dF_{\vect{\Theta}}(\bftheta) \\
	& = \int (\bfpi_1 \otimes \bfpi_2) \exp\left({\theta_1 \bfT_1 y_1 \oplus \theta_2 \bfT_2 y_2}\right) \bfe  dF_{\vect{\Theta}}(\bftheta) \\
	& = \int (\bfpi_1 \otimes \bfpi_2) \exp\left({\theta_1 \bfT_1 y_1 \otimes \bfI_2 + \bfI_1 \otimes \theta_2 \bfT_2 y_2}\right) \bfe  dF_{\vect{\Theta}}(\bftheta) \\
	& = (\bfpi_1 \otimes \bfpi_2) \mathcal{L}_{\bfTheta}(-\bfT_1 y_1 \otimes \bfI_2 ,  -\bfI_1 \otimes  \bfT_2 y_2) \bfe, \quad y_1,y_2\ge0,
\end{align*}

where $\mathcal{L}_{\vect{\Theta}}$  is  the joint Laplace transform of $\vect{\Theta}$, that is 
\begin{align*}
	\mathcal{L}_{\vect{\Theta}} (u_1,u_2 ) = \E \left( \exp\left({-u_1 \Theta_1 - u_2 \Theta_2  }\right)\right),\quad u_1,\,u_2\ge0.
\end{align*}

Note that $\bfY = (Z_1 /\Theta_1, \dots, Z_d /\Theta_d)^{\top}$, where $Z_i$ are independent $\mbox{PH}(\bfpi_i, \bfT_i)$ distributed random variables, $i = 1, \dots, d$, independent of $\bfTheta$. Indeed,
\begin{align*}
	\P\left( Y_1> y_1, \dots, Y_d > y_d \right) &= \int \P\left( Y_1> y_1, \dots, Y_d > y_d \mid \bfTheta \right) dF_{\vect{\Theta}}(\bftheta) \\
	&= \int \P\left( Z_1> \theta_1 y_1, \dots, Z_d > \theta_d y_d \mid \bfTheta \right) dF_{\vect{\Theta}}(\bftheta) \\
	& = \int \prod_{i = 1}^{d} \bfpi_i \exp\left({\theta_i \bfT_i y_i}\right) \bfe  dF_{\vect{\Theta}}(\bftheta) \\
	& = S_{\bfY}(\bfy) \,.
\end{align*}

\subsection{Parameter estimation: correlated CPH distributions}

The maximum-likelihood estimation of this class of multivariate distributions can be performed via a generalized EM algorithm.  The derivation is done similarly to Algorithm~\ref{alg:EMMCPH} and thus omitted for brevity. Again, for estimation, we assume that $\bfTheta$ belongs to a parametric family depending on the vector $\bfalp$ and denote by $f_{\bfTheta}$ its corresponding joint density. The resulting detailed  routine is provided in Algorithm \ref{alg:EMCorCPH}.

\begin{algorithm}[]
\caption{Generalized EM algorithm for correlated CPH distributions}\label{alg:EMCorCPH}
\begin{algorithmic}
	\State \textit{\textbf{Input}}: Initialize with $( \bfpi_1,\dots,\bfpi_d, \bfT_1,\dots, \bfT_d, \vect{\alpha})$.
	\begin{enumerate} 
	\item[ 1.]\textit{E-step:} For each $i =1 ,\dots, d$, calculate

\begin{align*}
	& \E \left( B_{k}^i \mid \tilde{\bfY} = \tilde{\bfy} \right) 
	=\sum_{n=1}^{K} \int \frac{  \pi_{k}^{(i)} \bfe^{\top}_{k} \exp({\theta_i \bfT_i y_{n}^{(i)} }) \theta_i \bft_i \prod_{j \neq i }^{d} \bfpi_j \exp({ \theta_j \bfT_j y_{n}^{(j)}}) \theta_j \bft_j }{f_{\bfY}(\bfy_n)} \, f_{\bfTheta}(\bftheta) d\bftheta   \\[3mm]
	&\E \left( \Theta_i Z_{k}^i \mid \tilde{\bfY} = \tilde{\bfy} \right) \\
	&  = \sum_{n=1}^{K} \int  \theta_i \frac{ \int^{y_n^{(i)}}_{0}  \bfe^{\top}_{k} \exp({\theta_i \bfT_i(y_n^{(i)}-u)})\theta_i \bft_i \bfpi_i \exp({\theta_i \bfT_i u })\bfe_{k}   du \prod_{j \neq i }^{d} \bfpi_j \exp({ \theta_j \bfT_j y_n^{(j)}}) \theta_j \bft_j }{f_{\bfY}(\bfy_n)}  f_{\bfTheta}(\bftheta) d\bftheta  \\[3mm]
	& \E \left( N_{kl}^i \mid \tilde{\bfY} = \tilde{\bfy}\right) \\
	&  = \sum_{n=1}^{K} \int \theta_i t_{kl}^{(i)} \frac{ \int^{y_n^{(i)}}_{0}  \bfe^{\top}_{l} \exp({\theta_i \bfT_i(y_n^{(i)}-u)})\theta_i \bft_i \bfpi_i \exp({\theta_i \bfT_i u })\bfe_{k}   du \prod_{j \neq i }^{d} \bfpi_j \exp({ \theta_j \bfT_j y_n^{(j)}}) \theta_j \bft_j }{f_{\bfY}(\bfy_n)}  f_{\bfTheta}(\bftheta) d\bftheta \\[3mm]
	& \E \left( N_{k}^i \mid \tilde{\bfY} = \tilde{\bfy} \right)
	 = \sum_{n=1}^{K} \int \theta_i t_{k}^{(i)}   \frac{\bfpi_i \exp({\theta_i \bfT_i y_n^{(i)} })\bfe_{k}   \prod_{j \neq i }^{d} \bfpi_j \exp({ \theta_j \bfT_j y_n^{(j)}}) \theta_j \bft_j  }{f_{\bfY}(\bfy_n)} f_{\bfTheta}(\bftheta) d\bftheta
\end{align*}

	\item[ 2.]\textit{M-step:} Let
	\begin{align*}
	\hat{\vect{\alpha}} &= \argmax_{\vect{\alpha}} \E \left( \log(f_{\bfTheta}(\bfTheta; \vect{\alpha}) ) \mid \tilde{\bfY} = \tilde{\bfy}  \right) \\
	 &= \argmax_{\vect{\alpha}} \sum_{n=1}^{K} \int \log(f_{\bfTheta}(\bftheta; \vect{\alpha}) )   \frac{ \prod_{ i = 1}^{d} \bfpi_i \exp({ \theta_i \bfT_i y_n^{(i)}}) \theta_i \bft_i  \, }{f_{\bfY}(\bfy_n)}f_{\bfTheta}(\bftheta) d\bftheta
	\end{align*}
	and 
	\begin{align*}
		&\hat{\pi}_{k}^{(i)} = \frac{\E\left( B_{k}^i \mid \tilde{\bfY} = \tilde{\bfy}  \right)}{K}  
		\,, \quad
		\hat{t}_{kl}^{(i)} = \frac {\mathlarger{ \E\left( N_{kl}^i \mid \tilde{\bfY} = \tilde{\bfy}  \right) }}{\mathlarger{ \E \left(\Theta_i Z_{k}^i \mid \tilde{\bfY} = \tilde{\bfy}  \right) }}
		\,, \quad
		\hat{t}_{k}^{(i)} = \frac {\mathlarger{ \E\left( N_{k}^i \mid \tilde{\bfY} = \tilde{\bfy}  \right) }} {\mathlarger{ \E\left( \Theta_i Z_{k}^i \mid \tilde{\bfY} = \tilde{\bfy}  \right)}}
		\,, \\
		&\hat{t}_{kk}^{(i)} = -\sum_{l \neq k} \hat{t}_{kl}^{(i)} -\hat{t}_{k}^{(i)} \,, \quad i = 1, \dots, d\,.
	\end{align*}
	Let $\hat{\bfpi}_i = ( \hat{\pi}_{1}^{(i)}, \ldots , \hat{\pi}_{p_i}^{(i)} )$, $\hat{\bfT}_i = \{ \hat{t}_{kl}^{(i)} \}_{ k, l = 1, \ldots, p_i}$, and $\hat{\bft}_i = ( \hat{t}_{1}^{(i)}, \ldots, \hat{t}_{p_i}^{(i)} )^{ \top }$, $i =1 ,\dots, d$.
	
	\item[ 3.] Assign $\vect{\alpha} = \hat{\vect{\alpha}} $, $\bfpi_i:=\hat{\bfpi}_i$, $\bfT_i :=\hat{\bfT}_i$, $ \bft_i :=\hat{\bft}_i$, $i =1 ,\dots, d$, and GOTO 1.
	\end{enumerate} 
	\State \textit{\textbf{Output}}: Fitted parameters $( \bfpi_1,\dots,\bfpi_d, \bfT_1,\dots, \bfT_d, \vect{\alpha})$.
\end{algorithmic}
\end{algorithm}

\begin{remark} \rm 
	This algorithm suffers from the curse of dimensionality. The integrals above must  typically be computed numerically, given that explicit expressions are not available. Thus, the number of summands needed for the approximation increases rapidly with the dimension. It is also important  to mention that correlated frailty models are typically employed only in the bivariate case. In such a case, the above algorithm is computationally feasible, thus its relevance. 
\end{remark}

\subsection{Correlated SIPH distributions}
We now introduce an analogous model to the correlated frailty model based on IPH distributions, effectively the most general of our models.
Consider a multivariate random scaling component $\vect{\Theta} = (\Theta_1, \dots,\Theta_d)^{\top}$ and $\bfY = (Y_1, \dots,Y_d)^{\top}$, both in in $\mathbb{R}_+^d$, such that $Y_i$ are conditionally independent given $\vect{\Theta}$ with conditional distribution
\begin{align*}
	Y_i \mid \vect{\Theta} = (\theta_1, \dots, \theta_d)^{\top} \sim \mbox{IPH}(\bfpi_i, \bfT_i, \theta_i \lambda_i)\,, \quad i = 1, \dots, d \, .
\end{align*}
The joint survival function of $\bfY$ is then given by 
\begin{align*}
	S_{\bfY}(\bfy) = \int \prod_{i = 1}^{d} \bfpi_i \exp\left({\theta_i \bfT_i h_{i}^{-1}(y_i)}\right) \bfe  dF_{\vect{\Theta}}(\bftheta), \quad y_i\ge0,\:\:i=1,\dots,d.
\end{align*}

In the bivariate case, we have by simple calculations (using functional calculus) the explicit expression 
\begin{align*}
	S_{\bfY}(\bfy) 
	& = (\bfpi_1 \otimes \bfpi_2) \mathcal{L}_{\bfTheta}(-\bfT_1 h_1^{-1}(y_1) \otimes \bfI_2 ,  -\bfI_1 \otimes  \bfT_2 h_2^{-1}(y_2) ) \bfe, \quad y_1,\,y_2\ge0.
\end{align*}
Note that an alternative representation for $\bfY$ is $\bfY = ( h_1 (Z_1 / \Theta_1), \dots, h_d (Z_d / \Theta_d))^{\top}$, where $Z_i$ are independent PH distributed random variables independent of $\bfTheta$. The proof is akin to those of previous sections.

Now we consider a specific example with explicit joint density, namely the correlated Gamma case.
\begin{example}[Correlated Gamma scaling] \rm 
Inspired by \cite{yashin1995correlated}, we
consider $\vect{\Theta} = (\Theta_1, \Theta_2)^{\top}$ such that 
\begin{align*}
	& \Theta_1 = \frac{\eta_0}{\eta_1} W_0 + W_1 \\
	& \Theta_2 = \frac{\eta_0}{\eta_2} W_0 + W_2 \,,
\end{align*}
where $W_i \sim \mbox{Gamma}(\kappa_i, \eta_i)$, $\kappa_i,\eta_i>0$, $i = 0,1,2$, are independent.  
Then we have that 
\begin{align*}
	&\E\left( \exp(- u_1 \Theta_1 -u_2 \Theta_2 )\right) \\
	&\quad = \E\left( \exp\left(- \left(u_1 \frac{\eta_0}{\eta_1} + u_2 \frac{\eta_0}{\eta_2} \right) W_0 -u_1 W_1 -u_2 W_2 \right)\right) \\
	& \quad= \left( 1 + \left(\frac{u_1}{\eta_1} + \frac{u_2}{\eta_2}\right) \right)^{-\kappa_0} \left( 1 + \frac{u_1}{\eta_1} \right)^{-\kappa_1} \left( 1 + \frac{u_2}{\eta_2} \right)^{-\kappa_2}, \:\: u_1,\,u_2\ge0.
\end{align*}
This yields 
\begin{align*}
	 S_{\bfY} (y_1, y_2) 
	 &  = (\bfpi_1 \otimes \bfpi_2 ) \left( \bfI  -\left(\frac{h^{-1}_1(y_1)}{\eta_1} \bfT_1\right) \oplus \left(\frac{h^{-1}_2(y_2)}{\eta_2} \bfT_2\right)  \right)^{-\kappa_0} \\
	 & \quad \cdot\left(\bfI - \left( \frac{h^{-1}_1(y_1)}{\eta_1}\bfT_1\right)\otimes \bfI_2\right)^{-\kappa_1}  \left( \bfI - \bfI_1 \otimes \left(\frac{h^{-1}_2(y_2)}{\eta_2} \bfT_2\right) \right)^{-\kappa_2} \bfe \,.
\end{align*}

One typically sets $\eta_1 = \kappa_0 + \kappa_1 $ and $\eta_2 = \kappa_0 + \kappa_2 $. In this way $\E(\Theta_1) = \E(\Theta_2) = 1$, $\mbox{Var}(\Theta_1) = \eta_1^{-1}$, $\mbox{Var}(\Theta_2) = \eta_2^{-1}$ and  $\mbox{Corr}(\Theta_1, \Theta_2) = \kappa_0 / \sqrt{(\kappa_0 + \kappa_1 )(\kappa_0 + \kappa_2)}$.
\end{example}

%The following algorithm for maximum-likelihood estimation is immediate. 
%
% \begin{algorithm}[EM ]\label{alg:CorSIPH}  \
% 	
% 	0. Initialize with some ``arbitrary'' $( \bfpi_1, \dots,  \bfpi_d,\bfT_1, \dots, \bfT_d , \vect{\alpha}, \vect{\eta}, \bfbeta )$.
% 	
% 	1. Transform the data into $z_n^{(i)}=h_i^{-1}(y_n^{(i)}; \vect{\eta}_i) \exp(\bfX_n \bfbeta) $,  $i=1,\dots,d$, $n=1,\dots,K$, and 
% 	apply the E- and M-steps of Algorithm~\ref{alg:EMCorCPH} by which we obtain the estimators $( \hat{\bfpi}_1, \dots, \hat{\bfpi}_d,\hat{\bfT}_1, \dots, \hat{\bfT}_d, \hat{\vect{\alpha}})$.
% 	
% 	2. Compute  
% 	\begin{align*}
% 		(\hat{\vect{\eta}}, \hat{\bfbeta})  & = \argmax_{(\vect{\eta}, \bfbeta)} \sum_{n=1}^{K} \log (f_{\bfY}(\bfy_n; \hat{\bfpi}_1,\dots,\hat{\bfpi}_d,  \hat{\bfT}_1,\dots, \hat{\bfT}_d, \hat{\vect{\alpha}}, \vect{\eta}, \bfbeta )) 
% 	\end{align*}
% 	
% 	3. Assign $(\vect{\pi}_1,\dots,\vect{\pi}_d,\mat{T}_1,\dots,\mat{T}_d,\vect{\alpha},\vect{\eta}, \bfbeta) =(\hat{\bfpi}_1,\dots,\hat{\bfpi}_d,  \hat{\bfT}_1,\dots, \hat{\bfT}_d, \hat{\vect{\alpha}}, \hat{\vect{\eta}}, \hat{\bfbeta})$ and GOTO 1. 
% \end{algorithm}
\begin{remark}[Estimation]\rm 
	 Maximum-likelihood estimation can be performed via a modified EM algorithm, which is in the same form as Algorithm~\ref{alg:MSIPH} with the only change in step 1, where we now employ Algorithm~\ref{alg:EMMCPH}. We omit further details. 
\end{remark}

 %{\blue The problem is to find cases with explicit density. Although this is also the case with the correlated frailty models, and you can always approximate them numerically. WHAT ABOUT BAYESIAN APPROACHES?}

\begin{remark}[Correlated frailty] \rm
	The correlated frailty model assumes that the frailties of individuals are correlated and not necessarily shared. More specifically, in a bivariate correlated frailty model, the conditional joint density of $\bfY \mid \bfTheta = \bftheta$ is 
	\begin{align*}
		S_{\bfY | \bfTheta}(\bfy | \bftheta) = \exp(-\theta_1 M_1(y_1)) \exp(-\theta_2 M_2(y_2)) \,.
	\end{align*}
	In this way, the joint survival function of $\bfY $ is given by
	\begin{align*}
		S_{\bfY}(\bfy) = \mathcal{L}_{\bfTheta} (M_1(y_1), M_2(y_2)) \,.
	\end{align*}
	This is indeed a particular case of the correlated intensities model introduced in the present section when $p = 1$. 
\end{remark}

\section{Numerical illustrations}\label{sec:Numexamples}

In this section, we present some numerical illustrations of practical relevance.
In the first example, we test the performance of Algorithm~\ref{alg:MML} for the estimation of matrix Mittag-Leffler distributions in a simulation study.
In the second example, we consider the fitting of a SIPH distribution to a theoretical given distribution. 
In the third example, we fit a SIPH to a real-life insurance data set. 
As a final example, we perform a simulation study for a multivariate CPH distribution. 
In all cases, we ran the generalized EM algorithms until the changes in the successive log-likelihoods became negligible.

\subsection{Matrix Mittag-Leffler distributions}
We generated an i.i.d.\ sample of size $1,000$ from a matrix Mittag-Leffler distribution of 4 phases with parameters
 \begin{gather*}  
	{\bfpi}=\left(
	0.2, \,0.8,\, 0,\, 0\right)\,, \\ 
	\vect{T}=\left( \begin{array}{cccc}
	-2 &  0 & 2 & 0 \\
	5 & -8 & 0  & 3 \\
	0 & 0 & -1  & 0.5 \\
	0 & 0 & 0  & -4 
	\end{array} \right) \,, \\ 
	\alpha= 0.8 \,.
\end{gather*}
We then fitted a matrix Mittag-Leffler distribution with the same number of phases to the resulting sample using Algorithm~\ref{alg:MML}, obtaining the following parameters:
\begin{gather*}  
	\hat{\bfpi}=\left(
	0, \,0.0381,\, 0.8481, \, 0.1139 \right)\,, \\ 
	\hat{\bfT}=\left( \begin{array}{cccc}
	-3.4286 & 0.1942 & 0.0495 & 0.5393\\
	0.6080 & -1.2013 & 0.0184 & 0.0084  \\
	2.4001 & 2.1178 & -4.7794 & 0.2615  \\
	0.3800 & 0.2744 & 0.3870 & -1.0648 
	\end{array} \right) \,, \\ 
	\hat{\alpha}= 0.7928\,.
\end{gather*}

Observe that we can somewhat retrieve the parameters by keeping in mind possible permutation of states (since their labels are not relevant). Figure~\ref{fig:mml} shows that the algorithm recovers the structure of the data. Moreover, note that $\hat{\alpha}= 0.7928$, which determines the heaviness of the tail, is close to the original value $\alpha = 0.8$.
As further evidence of the quality of the fit, we have that the log-likelihood of the fitted model is $-1,769.596$, while using the original distribution parameters and structure, we obtain $-1,773.453$.
\begin{figure}[h]
\centering
\includegraphics[width=0.49\textwidth]{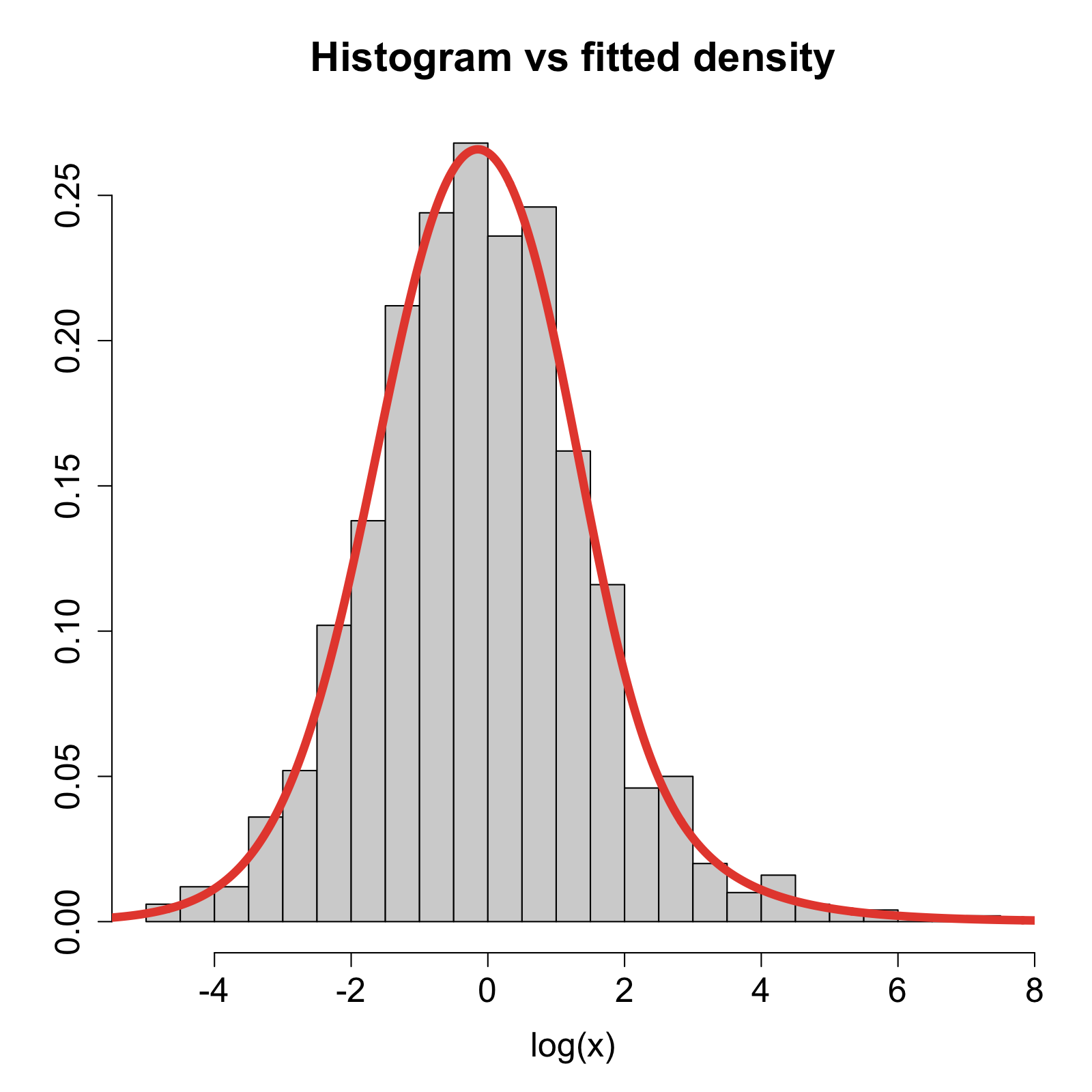}
\includegraphics[width=0.49\textwidth]{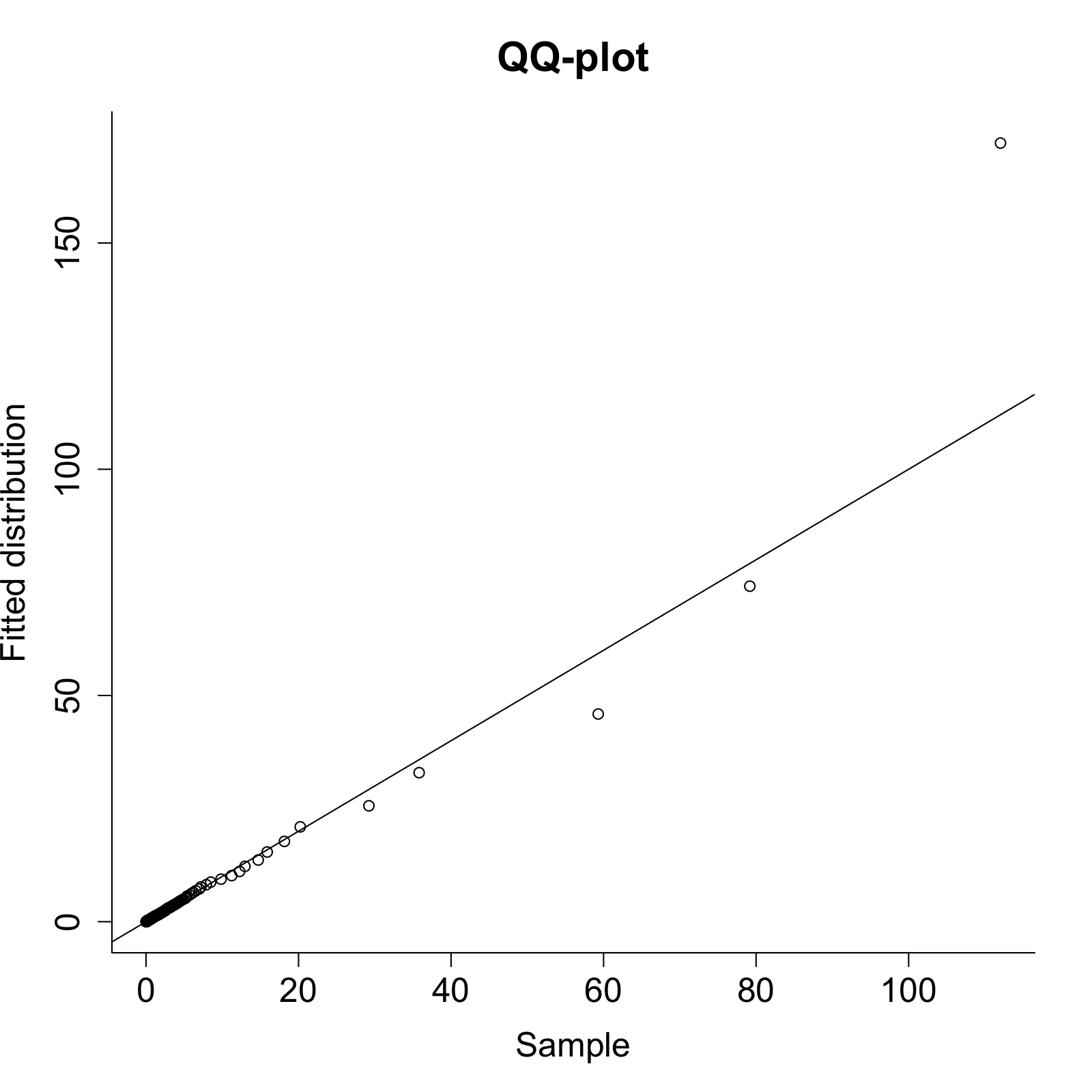}
\caption{Histogram of log-simulated data versus density of the fitted matrix Mittag-Leffler model (left), and corresponding QQ-plot (right).}
\label{fig:mml}
\end{figure}

%Finally, we would like to remark that the above algorithm can be modified to fit censored data in the same way as Algorithm~\ref{alg:SIPH}.

\subsection{Matrix-Weibull} Algorithm~\ref{alg:SIPH} can be easily modified to approximate given theoretical distributions. As in the PH case (\cite{asmussen1996fitting}), the idea consists of considering sequences of empirical distributions with increasing sample size. 
For instance, if we denote by $g$ the theoretical given density that we want to approximate, in step 1, we have that as $K\to \infty$,
\begin{align*}
	\hat{\pi}_k &= \frac{1}{K} \sum_{n = 1}^{K} \int \frac{\pi_k \bfe_k^\top \exp({\theta\bfT h^{-1}(y_n)}) \theta\bft}{\bfpi \exp({\theta\bfT h^{-1}(y_n)}) \theta\bft}  f_\Theta(\theta) d\theta \\
	%\to \E \left( \frac{\pi_k \bfe_k^\prime e^{\bfT h^{-1}(Y)} \bft}{\bfpi e^{\bfT h^{-1}(Y)}} \right) 
	&\to \int \int \frac{\pi_k \bfe_k^\top \exp({\theta\bfT h^{-1}(y)}) \theta\bft}{\bfpi \exp({\theta\bfT h^{-1}(y)})\theta \bft} f_\Theta(\theta) d\theta g(y)  dy \,.
\end{align*}
The rest of the formulas in step 1\ are adapted through the same limit. Regarding step 2, we have 
\begin{align*}
 		\hat{\vect{\eta}}  & \to \argmax_{\vect{\eta}} \int \log (f_{Y}(y; \hat{\bfpi}, \hat{\bfT}, \hat{\vect{\alpha}}, \vect{\eta} )) g(y)dy \,.
 \end{align*}

As a concrete example, we consider a Matrix-Weibull distribution (as introduced in \cite{albrecher2019inhomogeneous}, having no random scaling component)
with density function
\begin{align*}
	g(y) = \bfpi \exp(\vect{S} y^{\beta}) \vect{s} \beta y^{\beta -1} \,, \quad y>0 \,,
\end{align*}
and parameters
\begin{gather*}  
	{\bfpi}=\left(
	0.5, \,0.3,\, 0.2 \right)\,, \\ 
	\vect{S}=\left( \begin{array}{ccc}
	-1 &  1 & 0 \\
	0 & -2 & 1  \\
	0 & 0 & -5  
	\end{array} \right) \,, \\ 
	{\beta}= 2 \,.
\end{gather*}

Then we fitted a SIPH distribution of 3 phases with baseline intensity $\lambda(y) = \eta y^{\eta -1}$, $\eta >0$, and positive stable scaling. The fitted parameters are the following
\begin{gather*}  
	\hat{\bfpi}=\left(
	0.1876, \,0.3037,\, 0.5086 \right)\,, \\ 
	\hat{\bfT}=\left( \begin{array}{ccc}
	-1.9843 & 1.2605 &  0.5706\\
	 0.0133 & -1.2985 & 0.1584  \\
	2.3573 & 0.9338 & -5.2052 
	\end{array} \right) \,, \\ 
	\hat{\alpha}= 0.9146\,, \quad \hat{\eta}= 2.1723 \,.
\end{gather*}
The quality of the approximation is supported by Figure~\ref{fig:mweibull}, which shows that we recover the shape of the original distribution. Moreover, the product $\hat{\alpha}\hat{\eta} =  1.9867$, which determines the heaviness of the tail, can be compared with $\beta = 2$ for the given theoretical model.
\begin{figure}[h]
\centering
\includegraphics[width=0.49\textwidth]{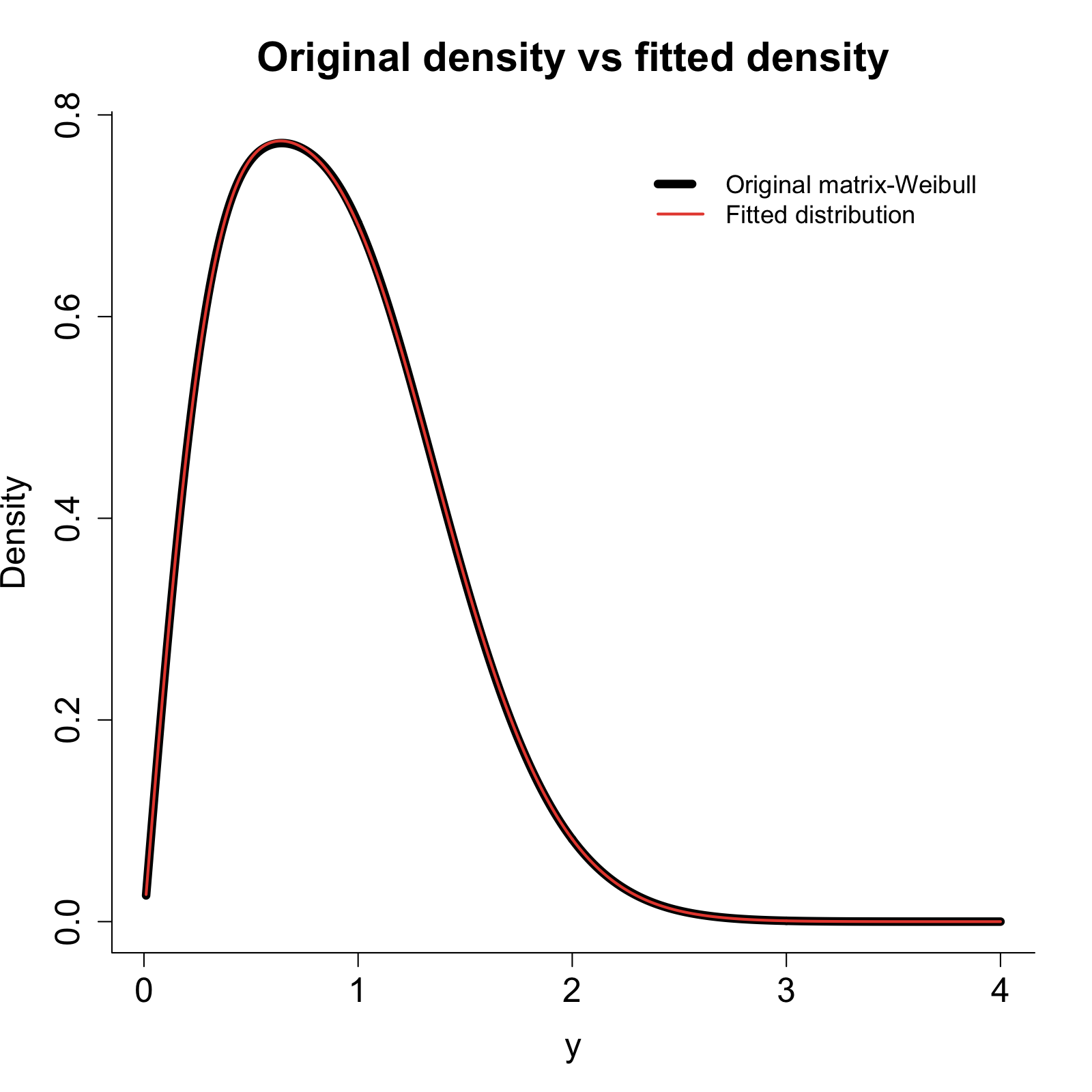}
\includegraphics[width=0.49\textwidth]{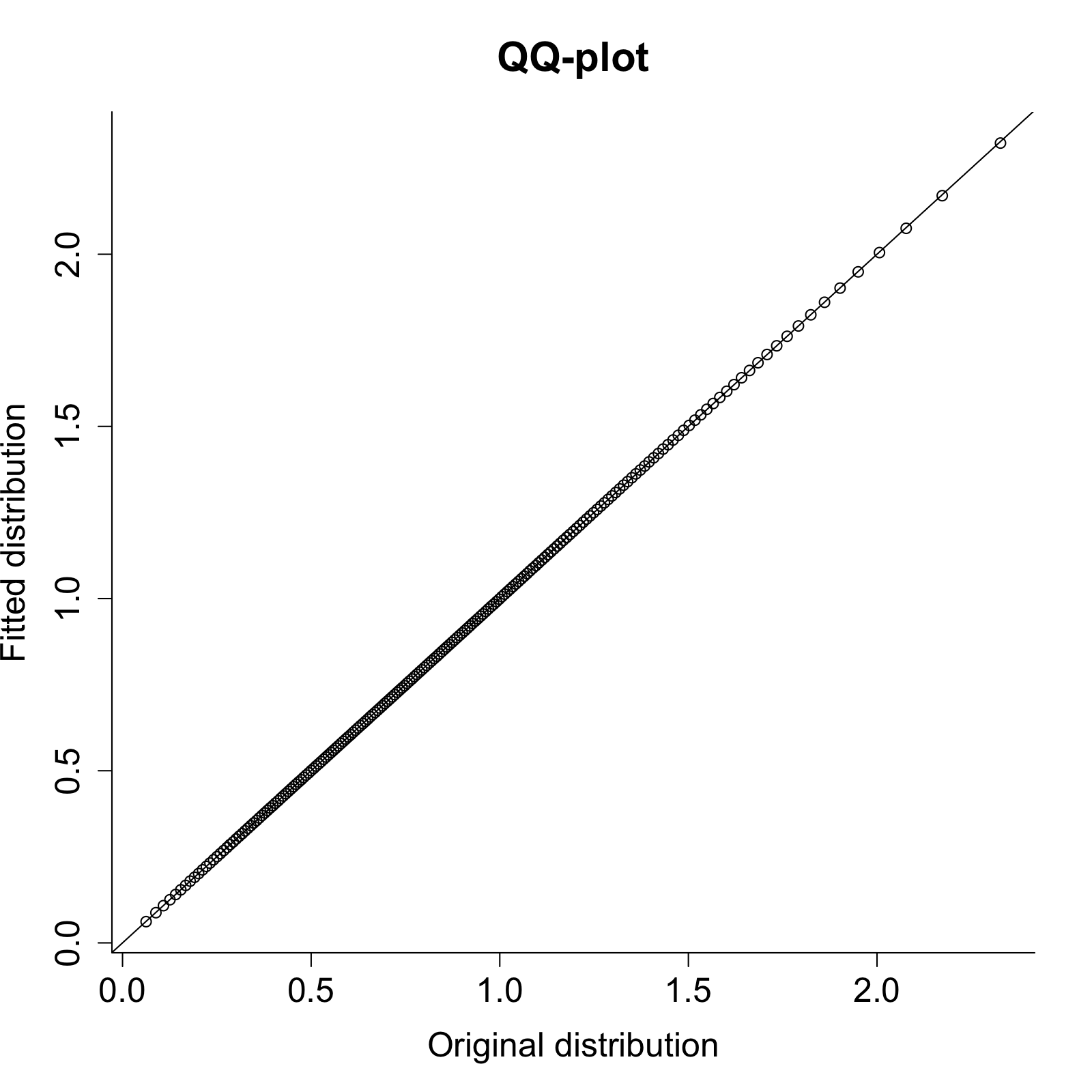}
\caption{Density of the original matrix-Weibull versus density of the fitted SIPH (left), and corresponding QQ-plot (right).}
\label{fig:mweibull}
\end{figure}

\subsection{Real-life data}
The Gamma-Gompertz frailty model is commonly employed for modeling human mortality at old ages (see, e.g., \cite{missov2013gamma,vaupel1979impact}). In the present example, we propose using SIPH distributions with Gamma scaling and Gompertz baseline intensity for modeling this type of data. 

As a concrete case of study, we consider the lifetimes of the Swedish population that die in the year $2011$ between ages $50$-$100$. This data was obtained from the Human Mortality Database (HMD). We add covariate information by considering a separation between females ($X=1$) and males ($X=0$) in the population. Then we fitted a SIPH distribution of $4$ phases with general Coxian structure in the PH component.
The estimated parameters are 
\begin{gather*}  
	\hat{\bfpi}=\left(
	 0.2097, \,0.1572,\, 0.3135,\,0.3196 \right)\,, \\ 
	\hat{\bfT}=\left( \begin{array}{cccc}
	-0.0022 & 0.0004 & 0 & 0 \\
	 0 & -1.1003 & 1.1003 & 0  \\
	0 & 0 & -0.6730 & 0.6730\\
	0 & 0 & 0 & 0.0001
	\end{array} \right) \,, \\ 
	\hat{\alpha}= 5.803\,, \quad \hat{\eta}= 0.1663 \,, \quad \hat{\beta}= -0.5389 \,.
\end{gather*}

Figure~\ref{fig:mGG} shows that the fitted distribution provides a reasonable model for both groups. If an even closer fit is sought, other parameters of the model need to be regressed as well.

\begin{figure}[h]
\centering
\includegraphics[width=0.49\textwidth]{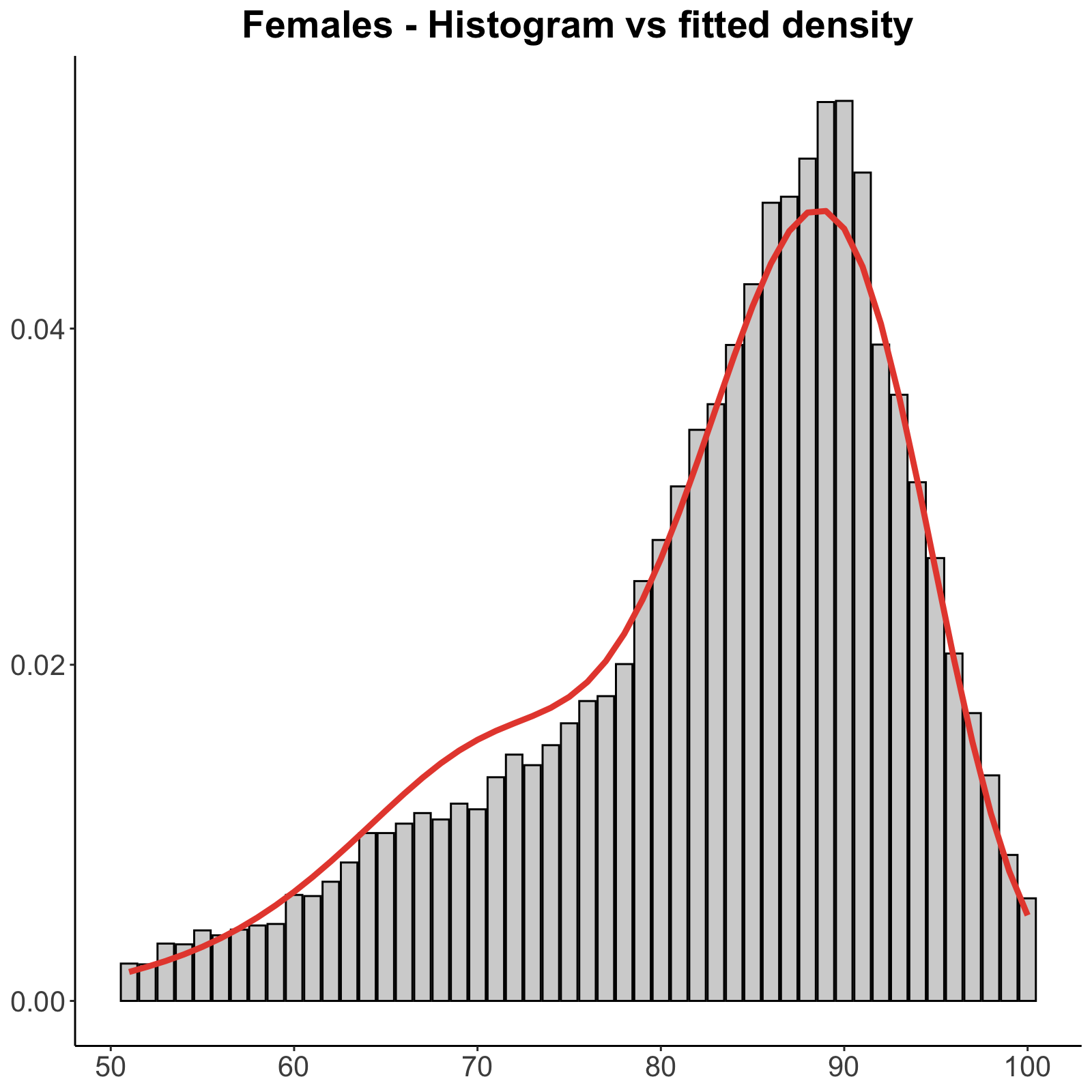}
\includegraphics[width=0.49\textwidth]{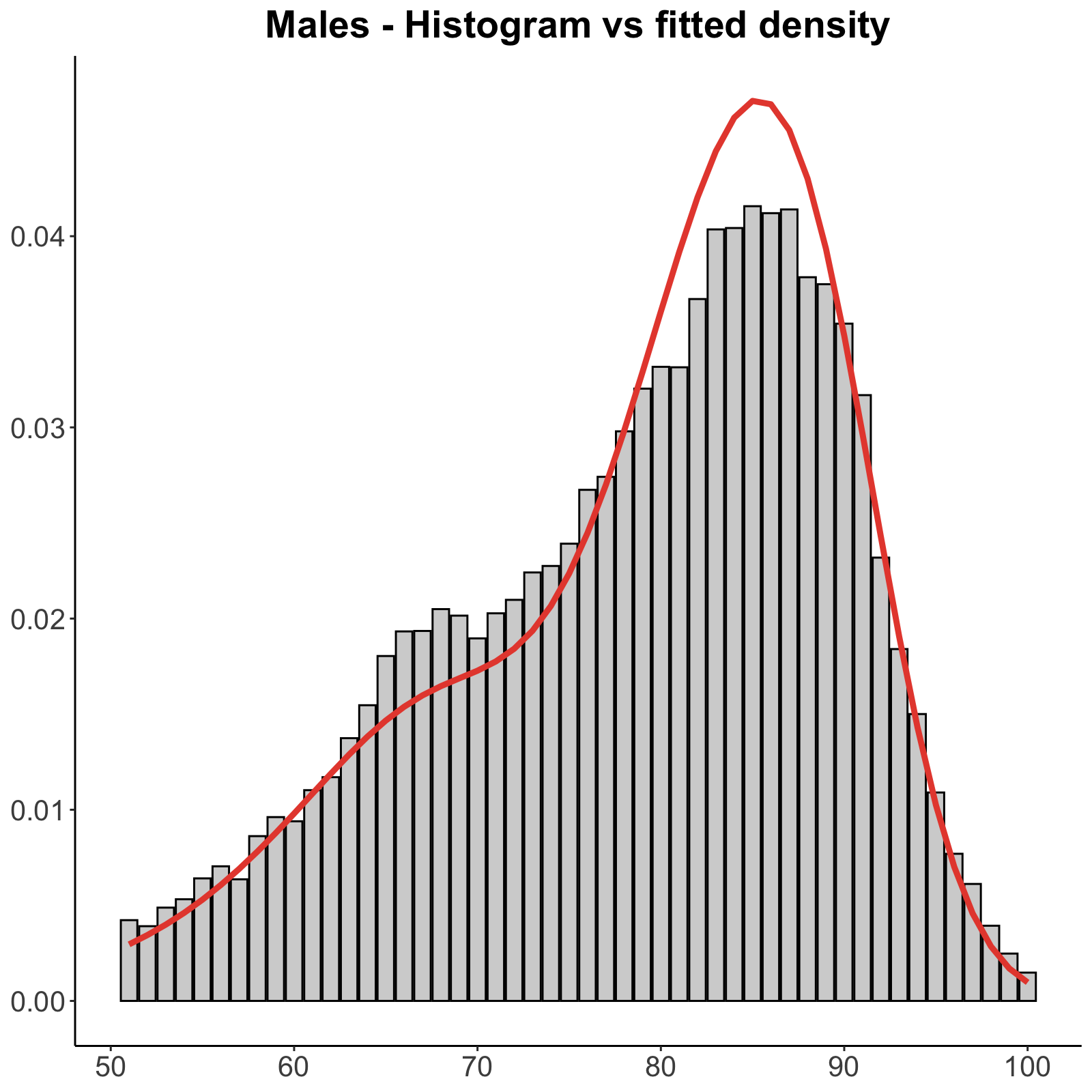}
\caption{Histogram of lifetimes of the Swedish female population that died in 2011 at ages 50 to 100 versus density of the fitted SIPH (left), and corresponding plot for the male population(right).}
\label{fig:mGG}
\end{figure}

\subsection{Multivariate example} We generated an i.i.d.\ sample of size $2, 500$ from a bivariate CPH distribution with parameters
\begin{gather*}  
	{\bfpi_1}=\left(
	1, \,0,\, 0 \right)\,, \\ 
	{\bfT_1}=\left( \begin{array}{ccc}
	-0.5 &  0.2 & 0 \\
	0 & -1 & 0.5  \\
	0 & 0 & -2  
	\end{array} \right) \,, \\
	{\bfpi_2}=\left(
	0.5, \,0.5 \right)\,, \\ 
	{\bfT_2}=\left( \begin{array}{cc}
	-0.1 &  0  \\
	0 & -1 
	\end{array} \right) \,, 
\end{gather*}
and Gamma scaling with $\alpha = 1.5 $. Note that the upper tail dependence coefficient of the theoretical model is $\lambda_U  = 0.2765$, while the empirical estimator of the sample is $\hat{\lambda}_U  = 0.28$.
Then we fitted a bivariate CPH model of same dimensions using Algorithm~\ref{alg:EMMCPH} obtaining the parameters
\begin{gather*}  
	\hat{\bfpi_1}=\left(
	 0.3268, \,0.2124,\, 0.4608\right)\,, \\ 
	\hat{\bfT_1}=\left( \begin{array}{ccc}
	-2.0252 & 1.0067 & 0.9015 \\
	0.0334 & -1.0061 & 0.3753 \\
	0.9293  & 0.6818 & -1.7945 
	\end{array} \right) \,, \\ 
	\hat{\bfpi_2}=\left(
	0.884, \,0.116 \right)\,, \\ 
	\hat{\bfT_2}=\left( \begin{array}{cc}
	-0.8978 & 0.3046  \\
	0.1501 & -0.1546 
	\end{array} \right) \,, \\
	\hat{\alpha} = 1.5874 \,.
\end{gather*}
Figure~\ref{fig:mcph_mar} shows that we recover the structure of both marginals. Regarding the dependence structure, this is supported by Figure~\ref{fig:mcph_cont}, where we offer some contour plots. Moreover, note that the parameter $\alpha$ that determines the heaviness of the tails of the marginals is close to the original model and that the coefficient of upper tail dependence $ \lambda_U  = 0.254$ is close to the original (and sample) one. Finally, note that the original model's log-likelihood is $-11,753.27$, compared with $-11,752.45$ for the fitted model.  

\begin{figure}[h]
\centering
\includegraphics[width=0.49\textwidth]{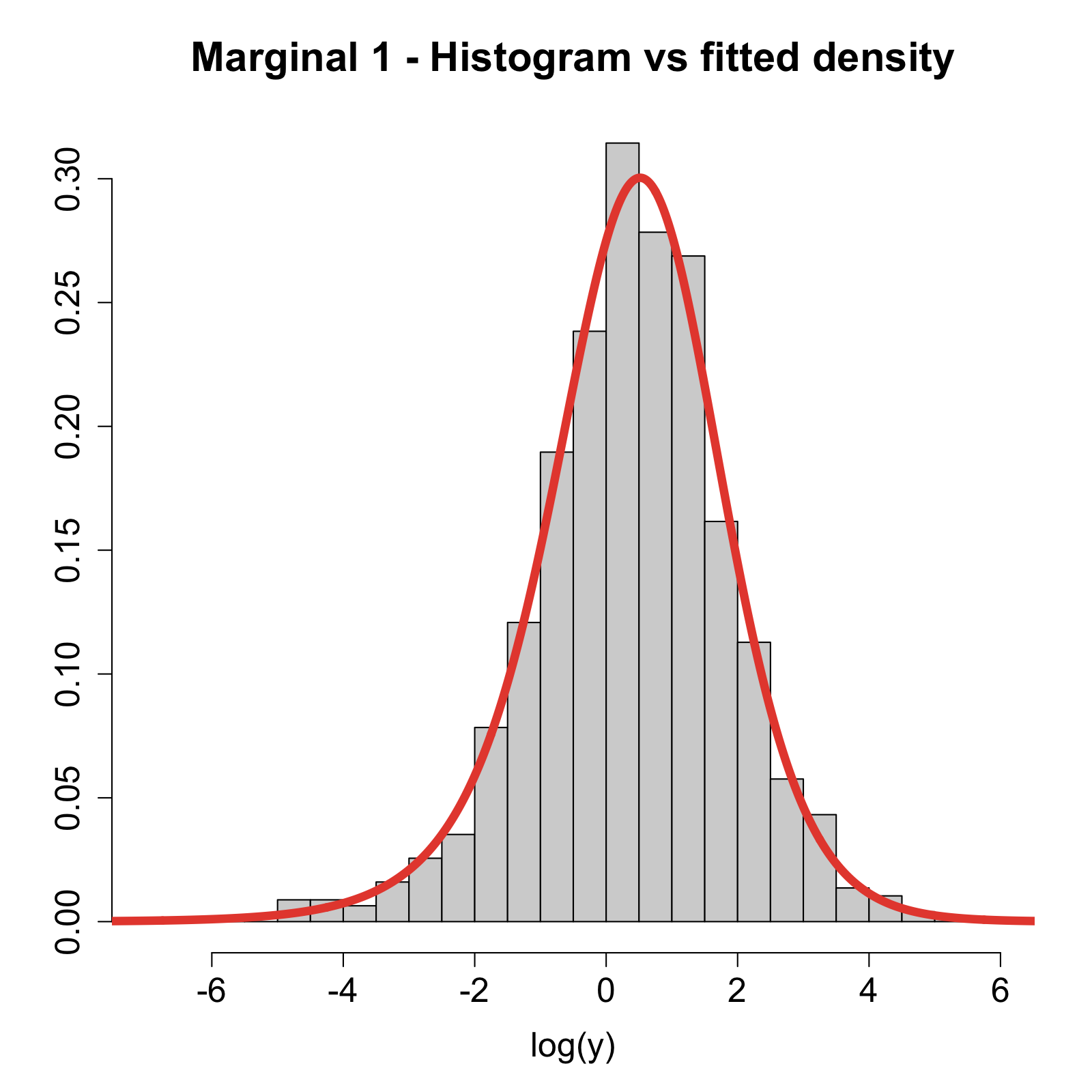}
\includegraphics[width=0.49\textwidth]{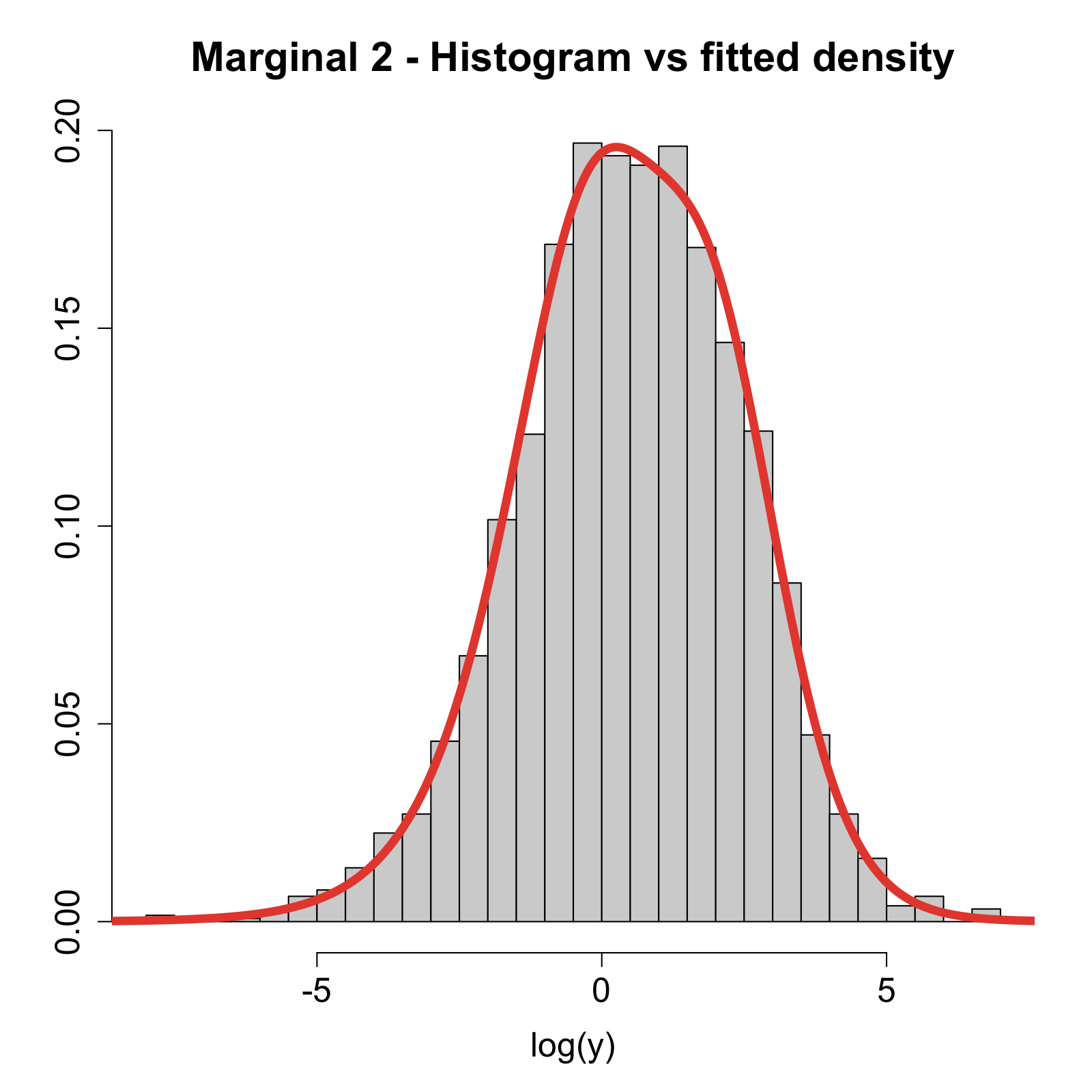}
\caption{Histograms of log-simulated data versus densities of the fitted distribution.}
\label{fig:mcph_mar}
\end{figure}

\begin{figure}[h]
\centering
\includegraphics[width=0.325\textwidth]{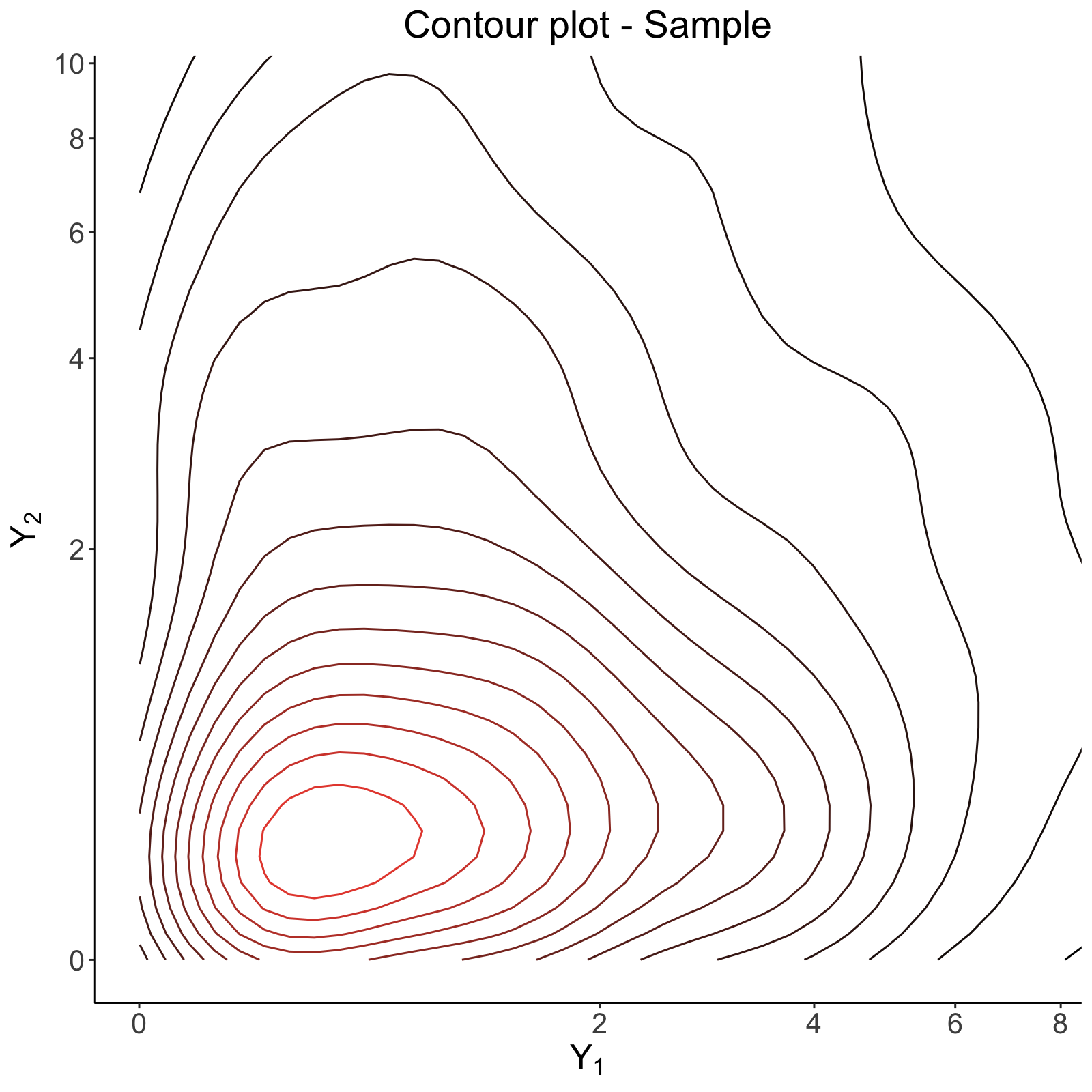}
\includegraphics[width=0.325\textwidth]{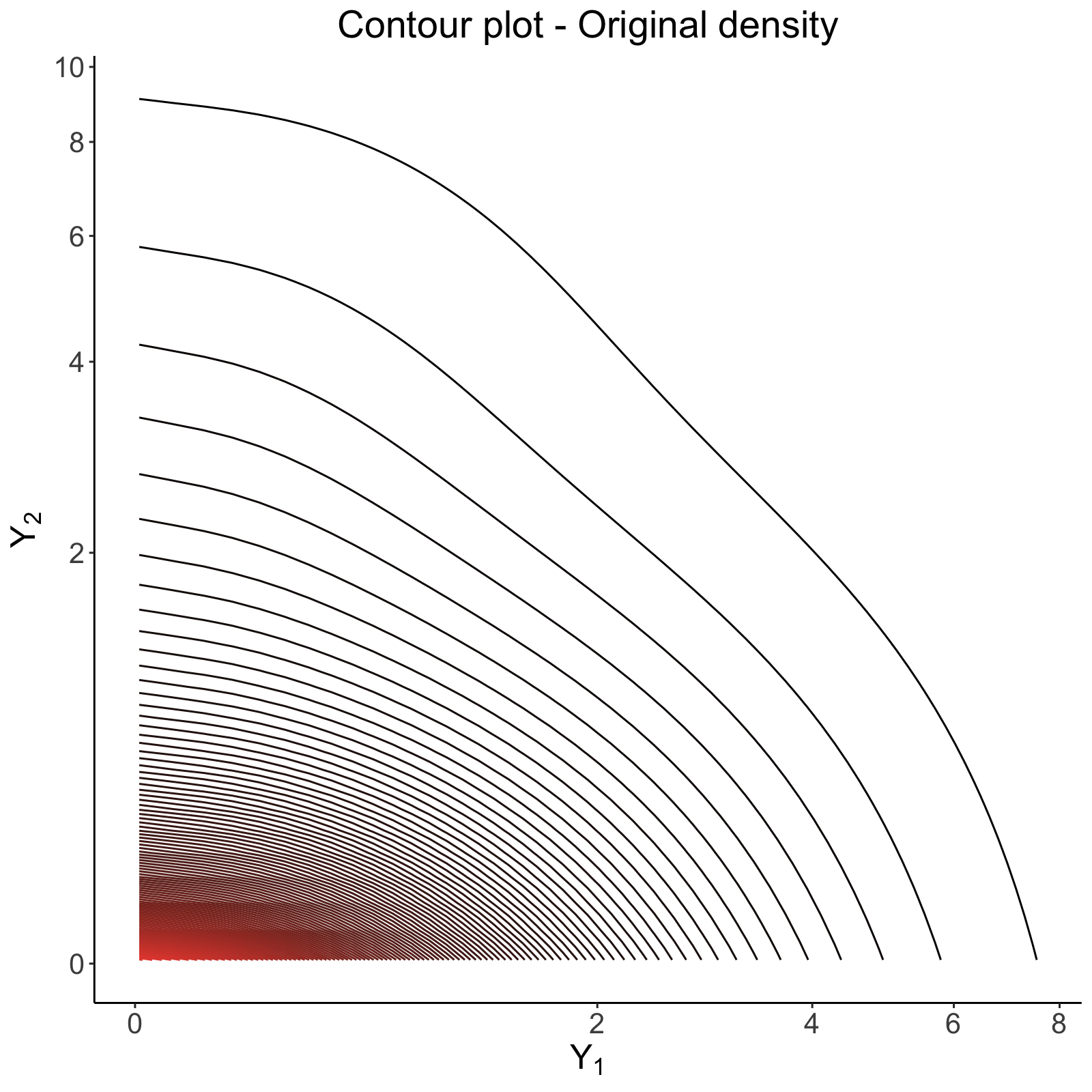}
\includegraphics[width=0.325\textwidth]{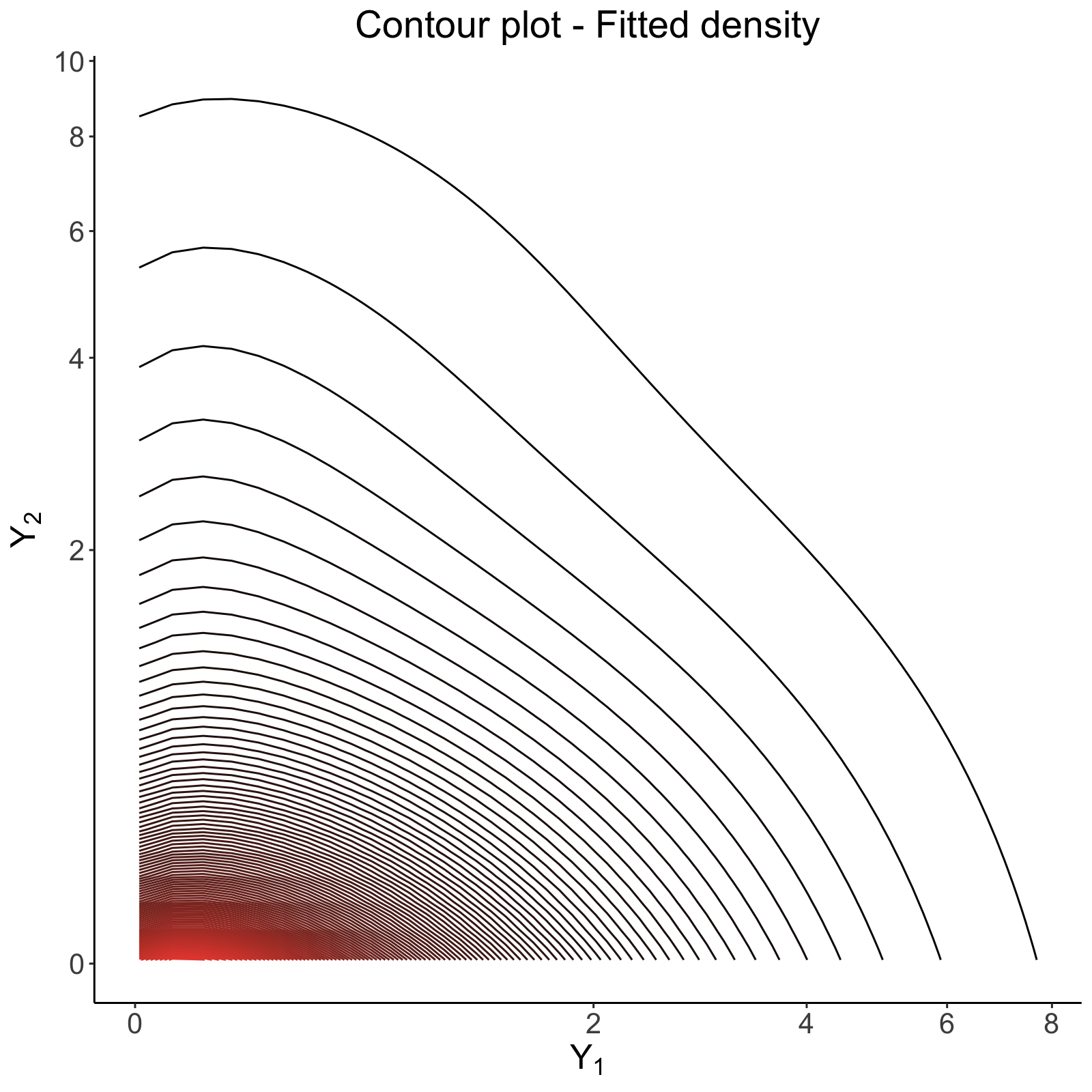}
\caption{Contour plot of the sample (left), contour plot of original distribution (center), and contour plot of fitted distribution (right).}
\label{fig:mcph_cont}
\end{figure}

\section{Conclusion}\label{sec:conclusion}
We have provided a phase-type-based model which can result in non-exponential tail behavior by introducing random and deterministic transformations. The resulting model is generally tractable in terms of matrix calculus through the Laplace transform of the random component, and thus closed-form formulas allow for statistical and probabilistic treatments, for instance, for fully explicit generalized EM algorithms. In the univariate case, the current three main ways of generating heavy-tailed phase-type distributions fall into our framework, and several new models are introduced to complement the existing suite of hidden Markov models. In the multivariate case, we obtain generalizations of well-known frailty models with fully explicit densities, contrary to other approaches of multivariate phase-type distributions in the literature (in terms of rewards or copulas). We finally show the feasibility of the statistical implementation of our models using four different examples.

Heavy-tailed phase-type distributions are statistically attractive since their interpretation in terms of an underlying evolving process is natural in many domains of application which involve processes that traverse numerous states through time, for instance, human lifetimes or legal cases. With the models and algorithms provided in this paper, we aim to provide a clearer picture of the possibilities and limitations of Markov models for practitioners that require non-standard but interpretable models.
A promising further direction of research for generating uni- and multivariate scaled phase-type distributions is to consider a general stochastic process as time-change, which for certain choices may provide fully explicit functionals and estimation procedures while remaining conceptually simple.

\textbf{Acknowledgement.} MB would like to acknowledge financial support from the Swiss National Science Foundation Project 200021\_191984.

JY would like to acknowledge financial support from the Swiss National Science Foundation Project IZHRZ0\_180549.

\textbf{Data availability statement.} The datasets generated and analyzed during the current study are available in the Zenodo repository, at \url{https://doi.org/10.5281/zenodo.5115819
}.

\textbf{Conflict of interest statement.} MB and JY declare no conflict of interest related to the current manuscript.

%\newpage
\bibliographystyle{abbrv}
\bibliography{Frailty_v3.bib}

\newpage

\begin{appendices}
 \section{Heavy-tailed definitions} \label{ap:def}
 
  \begin{definition} \rm 
 A distribution function $F$ on $\R_{+}=[0, \infty)$, with corresponding survival function $S = 1 - F$, is called: \
 \begin{enumerate}
	\item  {\em Regular varying} with index $\alpha \geq 0$ if 
		\begin{align*}
			\lim_{x \to \infty} \frac{S(\lambda x)}{S( x)} = \lambda^{-\alpha} 
		\end{align*}
		for all $\lambda>0$. If $\alpha = 0$, then $F$ is called {\em slowly varying}.
	
	\item {\em Weibull-type} if 
\begin{align*}
	S(x) \sim c x^{\beta} \exp (- \lambda  x ^{\tau} )\,, \quad x\to \infty \,,
\end{align*}
	for some constants $\beta \in \mathbb{R}$ and $\tau, \, \lambda , \, c > 0$. A Weibull-type distribution has heavier than exponential tail behavior if $\tau \in (0,1)$, exponential-type behavior if $\tau = 1$, and lighter than exponential otherwise.
	\item  {\em Lognormal-type} if 
\begin{align*}
	S(x) \sim c x^{\beta} (\log x)^{\xi}\exp (- \lambda (\log x )^{\gamma} )\,, \quad x\to \infty\,,
\end{align*}
	for some constants $\beta, \xi \in \R$, $\gamma > 1$ and $\lambda , \, c > 0$. Note that  in particular, the lognormal distribution is lognomal-type with  $\gamma = 2$. 
	\end{enumerate}
 \end{definition}
\end{appendices}

\end{document}